\newtheorem{prop}{Proposition}[section]
\newtheorem{thm}[prop]{Theorem}
\newtheorem{cor}[prop]{Corollary}
\newtheorem{conj}[prop]{Conjecture}
\newtheorem{lem}[prop]{Lemma}
\theoremstyle{definition}
\newtheorem{defn}[prop]{Definition}
\newtheorem{rem}[prop]{\it Remark}
\newtheorem*{claim*}{Claim}
\newtheorem*{psr}{Postscript Remark}
\newcommand{\bP}{\mathbb{P}}
\newcommand{\bC}{\mathbb{C}}
\newcommand{\bR}{\mathbb{R}}
\newcommand{\bA}{\mathbb{A}}
\newcommand{\bQ}{\mathbb{Q}}
\newcommand{\bZ}{\mathbb{Z}}
\newcommand{\bG}{\mathbb{G}}
\newcommand{\cX}{\mathcal{X}}
\newcommand{\cO}{\mathcal{O}}
\newcommand{\cM}{\mathcal{M}}
\newcommand{\cN}{\mathcal{N}}
\newcommand{\cR}{\mathcal{R}}
\newcommand{\cD}{\mathcal{D}}
\newcommand{\cH}{\mathcal{H}}
\newcommand{\fa}{\mathfrak{a}}
\newcommand{\fb}{\mathfrak{b}}
\newcommand{\fm}{\mathfrak{m}}
\newcommand{\fB}{\mathfrak{B}}
\newcommand{\fp}{\mathfrak{p}}
\newcommand{\fq}{\mathfrak{q}}
\newcommand{\Spec}{\mathrm{Spec}}
\newcommand{\Supp}{\mathrm{Supp}~}
\newcommand{\mult}{\mathrm{mult}}
\newcommand{\lct}{\mathrm{lct}}
\newcommand{\vol}{\mathrm{vol}}
\newcommand{\ord}{\mathrm{ord}}
\newcommand{\hvol}{\widehat{\vol}}
\newcommand{\hell}{\widehat{\ell}}
\newcommand{\rme}{\mathrm{e}}
\newcommand{\hcol}{\widehat{\ell}}
\newcommand{\e}{\mathrm{e}}
\newcommand{\bin}{\mathbf{in}}
\newcommand{\Val}{\mathrm{Val}}
\newcommand{\lnlc}{\ell_{\mathrm{nlc}}}
\newcommand{\lnklt}{\ell_{\mathrm{nklt}}}
\begin{document}

\title[Birational superrigidity and K-stability]{Birational superrigidity and K-stability of singular Fano complete intersections}

\author{Yuchen Liu}
\address{Department of Mathematics, Yale University, New Haven, CT 06511, USA.}
\email{yuchen.liu@yale.edu}

\author{Ziquan Zhuang}
\address{Department of Mathematics, Princeton University, Princeton, NJ 08544, USA.}
\email{zzhuang@math.princeton.edu}
\date{}

\subjclass[2010]{14E08, 32Q20 (primary), 13A18, 14M10 (secondary).}

\maketitle

\begin{abstract}
    We introduce an inductive argument for proving birational superrigidity and K-stability of singular Fano complete intersections of index one, using the same types of information from lower dimensions. In particular, we prove that a hypersurface in $\mathbb{P}^{n+1}$ of degree $n+1$ with only ordinary singularities of multiplicity at most $n-5$ is birationally superrigid and K-stable if $n\gg0$. As part of the argument, we also establish an adjunction type result for local volumes of singularities.
\end{abstract}

\section{Introduction}

Due to their close relations to the non-rationality of Fano varieties and the existence of K\"ahler-Einstein metric, the notions of birational superrigidity and K-(semi)stability have generated a lot of interest. However, it can be challenging to determine whether a given Fano variety is birationally superrigid and K-(semi)stable or not. The case of smooth complete intersections of index one is better understood, thanks to the work of \cite{IM-quartic,Puk-general-hypersurface,Che-quintic,dFEM-bounds-on-lct,dF-hypersurface,Suzuki-cpi,Fujita-alpha,Z-cpi}, culminating into the following theorem.

\begin{thm}[\emph{loc. cit.}] \leavevmode
    \begin{enumerate}
        \item Every smooth hypersurface $X\subseteq \bP^{n+1}$ of degree $n+1$ is K-stable and is birationally superrigid when $n\ge 3$;
        \item Every smooth Fano complete intersection $X\subseteq\bP^{n+r}$ of index $1$, codimension $r$ and dimension $n\ge 10r$ is birationally superrigid and K-stable.
    \end{enumerate}
\end{thm}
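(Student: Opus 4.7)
The plan is to treat birational superrigidity and K-stability in parallel, both being driven by the same log canonical threshold (lct) technology. For birational superrigidity of a smooth hypersurface $X\subset\bP^{n+1}$ of degree $n+1$, I would invoke the Noether--Fano--Iskovskikh test: assuming $X$ is not superrigid, there is a movable linear system $\cM\subseteq|mH|$ such that $(X,\tfrac{1}{m}\cM)$ is not canonical. Choose a center $Z$ of non-canonical singularities and two general members $D_1,D_2\in\cM$. The Noether--Fano inequality then forces $\mult_Z(D_1\cdot D_2)>4m^2$ (with a weighted analogue if $\mathrm{codim}_X Z>2$). The key input is the de Fernex--Ein--Must\c{a}t\u{a} bound for lct of hypersurface singularities \cite{dFEM-bounds-on-lct,dF-hypersurface}, which implies $\lct_x(X;\tfrac{1}{m}\cM)\geq 1$ at every point $x\in X$, contradicting the failure of the canonical condition.

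For part (2), concerning complete intersections $X\subset\bP^{n+r}$ of codimension $r$ with $n\geq 10r$, I would induct on $r$. Slice $X$ by a general hyperplane chosen to meet a putative non-canonical center $Z$, apply inversion of adjunction to transfer the bad singularity to the restriction, and invoke the result for codimension $r-1$ complete intersections. The numerical slack $n\geq 10r$ absorbs the loss of positivity accumulated at each restriction step; this is essentially Pukhlikov's and Suzuki's iteration of ``$4n^2$-type'' inequalities combined with dFEM-style lct bounds extended to complete intersections, as in \cite{Puk-general-hypersurface,Suzuki-cpi}.

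For K-stability, the plan is to bound the $\alpha$-invariant from below by $\alpha(X)\geq\tfrac{n}{n+1}$ following Fujita \cite{Fujita-alpha}, or, more sharply, to show $\delta(X)>1$ via Zhuang's moving-boundary technique \cite{Z-cpi}. Concretely, for every effective $\bQ$-divisor $D\sim_\bQ-K_X$, one proves $\lct(X,D)\geq 1$ by comparing $D$ with its restriction to a general hyperplane section and again applying dFEM-type lct bounds; Tian's criterion then yields K-stability. In the codimension $r$ case, the same induction on $r$ as for superrigidity handles the lct estimate.

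The main obstacle in this whole program is establishing the sharp dFEM-type lct bound itself: showing that on a smooth Fano complete intersection of index one, any effective $\bQ$-Cartier divisor has lct bounded uniformly below in a way compatible with both the Noether--Fano test and the $\alpha$-invariant estimate. This requires delicate inversion-of-adjunction arguments controlling contributions from tangent hyperplane sections, which is the technical heart of \cite{Puk-general-hypersurface,dFEM-bounds-on-lct,dF-hypersurface,Suzuki-cpi}. A secondary obstacle is controlling the induction when the non-canonical center has large codimension, where a Corti-style ``excess intersection'' analysis, together with degree-by-codimension weighting of the Noether--Fano inequality, is needed to keep the numerical bounds positive as $r$ grows.
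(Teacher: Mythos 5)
This statement is a survey of known results that the paper cites from \cite{IM-quartic,Puk-general-hypersurface,Che-quintic,dFEM-bounds-on-lct,dF-hypersurface,Suzuki-cpi,Fujita-alpha,Z-cpi}; the paper itself offers no proof, so there is no internal argument to compare against. That said, your sketch does track the literature in broad strokes (Noether--Fano test plus dFEM-type multiplicity/lct estimates for superrigidity; $\alpha$-invariant or stability-threshold estimates for K-stability), but two points need correction.

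First, the K-stability estimate you state is too strong. You write that one ``proves $\lct(X,D)\geq 1$ for every effective $D\sim_{\bQ}-K_X$,'' i.e.\ $\alpha(X)\geq 1$. This is false for smooth index-one hypersurfaces: tangent hyperplane sections already give divisors with $\lct<1$, and the correct Fujita bound is $\alpha(X)\geq\frac{n}{n+1}$, from which K-stability follows for $n\geq 2$ by \cite{Fujita-alpha}. The alternative route via birational superrigidity (\cite{rigid-imply-stable}, restated in the paper as Lemma~\ref{lem:rigid imply stable}) requires only $\lct(X;D)>\frac{1}{2}$, which is what Zhuang's movable-boundary technique in \cite{Z-cpi} actually delivers; neither version of the threshold is $1$. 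Second, for part (2) the literature does not run an induction on the codimension $r$. The argument of Suzuki and Zhuang is a single restriction: one shows $\mult_S(M^2)\leq 1$ for every subvariety $S$ of dimension $\geq 2r$ in the smooth locus (Pukhlikov's $8n^2$-type estimate, cf.\ \cite[Proposition~2.1]{Suzuki-cpi}), concludes via \cite{dFEM-mult-and-lct} that $(X,2M)$ is lc outside a low-dimensional set, cuts once by a general linear section of codimension roughly $2r$, and then invokes the local lct/colength estimate on that section. The $n\geq 10r$ bound comes out of a careful count of the degrees involved in this one-shot slicing, not an iteration over $r$. You also conflate the local dFEM lct-versus-multiplicity bound with the global statement $\lct(X,D)\geq c$; the latter is not an input but a conclusion obtained by combining the local bound with degree estimates. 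With these repairs the sketch lines up with the cited proofs.
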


When the Fano varieties are singular, the situation is less clear, even for hypersurfaces $X\subseteq\bP^{n+1}$ of index one with only ordinary singularities (i.e. isolated singularities whose projective tangent cone is smooth). In general, they fail to be birationally superrigid or K-stable when they are too singular. For example, $X$ is rational (resp. admits a birational involution) if it has a point of multiplicity $n$ (resp. $n-1$), and therefore is not birationally superrigid. By the work of the first author \cite{Liu18}, we also know that a Fano variety is not K-(semi)stable if the local volume at a singular point is too small. On the other hand, it is expected that the index one hypersurfaces $X$ are birationally superrigid and K-stable when their singularities are mild. Indeed, Pukhlikov \cite{Puk-singular} shows that a \emph{general} hypersurface $X\subseteq\bP^{n+1}$ of degree $n+1$ having a singular point of multiplicity $\mu\le n-2$ is birationally superrigid (see also \cite{EP-sing-cpi} which treats general complete intersections whose projective tangent cones are intersections of hyperquadrics), de Fernex \cite{dF-singular} establishes their birational superrigidity under certain numerical assumption (involving the dimension of singular locus and the Jacobian ideals of linear space sections of $X$) which in particular applies to index one hypersurfaces $X\subseteq\bP^{n+1}$ with ordinary singularities of multiplicity at most $2\sqrt{n+1}-7$ and Suzuki \cite{Suzuki-cpi} generalizes his method to certain singular complete intersections. As for K-stability, very little is known except in small dimensions or small degrees, e.g. log del Pezzo surfaces \cite{OSS-del-pezzo}, quasi-smooth 3-folds \cite{JK-weighted-3fold}, complete intersection of two quadric hypersurfaces in all dimensions \cite{Spotti-Sun}, and cubic threefolds with isolated $A_k\, (k\leq 4)$ singularities \cite{LX-cubic-3fold}.

The main purpose of this paper is to introduce an argument that can be used to prove both birational superrigidity and K-stability for a large class of singular Fano complete intersections of Fano index one (i.e. $-K_X$ is linearly equivalent to the hyperplane class). In particular, we have the following results.

\begin{thm} \label{thm:hypersurface-ordinary}
There exists an absolute constant $N\le 250$ such that every degree $n+1$ hypersurface $X\subseteq\bP^{n+1}$ with only ordinary singularities of multiplicity at most $n-5$ is birationally superrigid and K-stable when $n\ge N$.
\end{thm}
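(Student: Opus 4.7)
The plan is to reduce both conclusions to uniform lower bounds on a local invariant at every closed point of $X$, and then to establish those bounds at ordinary singular points by reduction to a lower-dimensional index-one smooth Fano complete intersection to which the theorem stated in the excerpt already applies. The global-to-local reductions are standard: birational superrigidity follows from ruling out maximal singularities via Noether--Fano, while K-stability follows from $\delta(X)>1$ via the valuative criterion of Fujita and Li. In either case a hypothetical obstruction is a divisorial valuation $E$ over $X$ whose center is either a smooth point of $X$---in which case the techniques used in the smooth case (alpha-invariant bounds, Segre-type multiplicity estimates) carry over with only minor modifications, provided the singular locus has sufficiently large codimension---or an ordinary singular point $p\in X$ of multiplicity $\mu\le n-5$. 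It is the latter case that requires new input.

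At such a point $p$, the projective tangent cone $T\subseteq\bP^n$ is a smooth hypersurface of degree $\mu$, so general complete intersection sections of $T$ by $n-\mu$ hyperplanes are smooth hypersurfaces of degree $\mu$ in $\bP^{\mu}$, hence smooth index-one Fano hypersurfaces of dimension $\mu-1$. The key technical input---presumably the content of the ``adjunction type result for local volumes'' promised in the abstract---is an inequality bounding $\hvol(p,X)$ from below in terms of a global stability invariant (an alpha-invariant or a stability threshold) of such a section. Feeding into this adjunction the explicit lower bounds on those invariants supplied by the smooth theorem quoted in the excerpt yields a lower bound for $\hvol(p,X)$ strong enough both to dominate the threshold of \cite{Liu18} required for K-(semi)stability and to exclude the existence of a maximal singularity at $p$ (via, e.g., a Corti-style multiplicity inequality on the blow-up of $p$, in which the tangent cone section plays the role of the ambient smooth Fano).

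Combining the smooth-center analysis and the bound at ordinary singular points yields the theorem once $n$ is large enough that (i) the smooth-case arguments apply on the complement of the singular locus, and (ii) the adjunction inequality, fed by the smooth bounds in dimension $\mu-1\le n-6$, produces a local volume comfortably above the instability threshold. The main obstacle, and the source of the explicit constant $N\le 250$, is item (ii): the adjunction loses a dimension-dependent factor that must be weighed against the sharpest available smooth lower bounds. The slack ``$-5$'' in the multiplicity hypothesis is precisely what guarantees that the tangent cone remains Fano with enough headroom for the adjunction inequality to close, and a careful bookkeeping of all constants from \cite{Z-cpi,Fujita-alpha,Liu18} together with the adjunction loss produces the explicit threshold $N\le 250$.
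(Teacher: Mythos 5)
Your proposal routes everything through the adjunction for local volumes (the paper's Theorem \ref{thm:adjunction}) and lower bounds on $\hvol(p,X)$, but this is the strategy the paper uses for Theorem \ref{thm:hypersurface-general}, not Theorem \ref{thm:hypersurface-ordinary}, and it cannot reach the multiplicity bound $\mu\le n-5$. The obstruction is quantitative: the criterion that actually drives the proof (Theorem \ref{thm:main criterion}) requires the minimal non-klt/non-lc \emph{colength} of a general linear-space section to beat a binomial coefficient $\binom{n+2}{3}$, and the passage from $\hvol$ to colength costs a factor of $n!\cdot\e(x,X)$ by Lech's inequality (Proposition \ref{prop:lech}). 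Even with the sharpest cone-over-K-stable-Fano lower bound and the adjunction, one gets $\hell(x,X)\gtrsim n^n/(n!\cdot\mu^{\mu})$, which by Stirling decays \emph{exponentially} to zero once $\mu$ is comparable to $n$ (e.g.\ $\mu=n-5$), and in particular fails to dominate $\binom{n+2}{3}$. This is exactly why the paper's Theorem \ref{thm:hypersurface-general} carries the extra hypothesis (3), which in the isolated-singularity case forces $\mu\lesssim n^{1-\epsilon}$.

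The paper's proof of Theorem \ref{thm:hypersurface-ordinary} instead bypasses local volumes entirely: it deduces the theorem from Theorem \ref{thm:cpi-ordinary} with $(\delta,r)=(0,1)$, whose engine is Lemma \ref{lem:non-klt colength}. That lemma gives a \emph{direct} combinatorial lower bound $\lnlc(x,Y)\ge\binom{n-1}{m}$ where $m$ is the Fano index of the tangent cone of the codimension-two slice $Y$ (here $m=4$ once $\mu\le n-5$), and this is polynomial in $n$ of degree equal to that index, so it dominates $\binom{n+2}{3}$ for $n$ large. The dimension-free input is not an adjunction for $\hvol$ but a \emph{conditional birational superrigidity} statement for the tangent cone---\cite[Theorem A.2]{Z-cpi}, stating that a movable boundary $M\sim_{\bQ}mH$ on a high-dimensional smooth Fano complete intersection with base locus of codimension $\ge m+1$ is canonical---together with a projection/counting argument on the associated graded ring. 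The explicit constant $N\le250$ comes from taking $N_0=200$ in \cite[Remark A.6]{Z-cpi} for $(m,r)=(4,1)$, not from any bookkeeping of local-volume or alpha-invariant constants. Separately, note that the K-stability threshold in \cite{Liu18} you invoke is only a \emph{necessary} condition for K-semistability, so dominating it would not yield K-stability; the paper closes the K-stability step via Lemma \ref{lem:rigid imply stable} (superrigidity plus $\lct(X;D)>1/2$ for all $D\sim_{\bQ}-K_X$), again through the colength criterion rather than a $\delta$-invariant estimate.
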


\begin{thm} \label{thm:cpi-ordinary}
Let $\delta\ge -1$ and $r\ge 1$ be integers, then there exists a constant $N$ depending only on $\delta$ and $r$ such that if $X\subseteq\bP^{n+r}$ is a Fano complete intersection of Fano index $1$, codimension $r$ and dimension $n\ge N$ such that 
    \begin{enumerate}
        \item The singular locus of $X$ has dimension at most $\delta$, 
        \item Every $($projective$)$ tangent cone of $X$ is a Fano complete intersection of index at least $4r+2\delta+2$ and is smooth in dimension $r+\delta$,
    \end{enumerate}
then $X$ is birationally superrigid and K-stable.
\end{thm}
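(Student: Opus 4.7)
The plan is to reduce birational superrigidity and K-stability of $X$ to local inequalities at each closed point, then split the analysis into the smooth and singular cases, and handle the latter by combining the projective tangent cone structure with the adjunction-type local volume result announced in the abstract. The overall structure is an induction that terminates at either the smooth case (Theorem~1.1) or a lower-dimensional instance of the same statement.

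I would first invoke the Noether--Fano--Iskovskikh criterion to reduce birational superrigidity to the canonicity of the pair $(X,\tfrac{1}{M}\cM)$ for every mobile linear system $\cM\subseteq|{-}MK_X|$, and the Fujita--Li valuative criterion to reduce K-stability to the inequality $\delta(X)>1$. Both conditions are local: at each closed point $x\in X$ they reduce to suitable lower bounds on the local log canonical threshold of $\cM$ at $x$ and on the normalized volume $\hvol(x,X)$, together with complementary estimates on basis-type divisors. At a smooth $x$, such bounds hold whenever $n$ is large compared to $r$ by the smooth complete intersection theory of \cite{Z-cpi,Fujita-alpha}. The novel content is at the singular points.

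At a singular point $x\in X$, I would deform $X$ at $x$ to the affine tangent cone $C$, whose projectivization $Y=T_xX\subseteq\bP^{n+r-1}$ is, by hypothesis~(2), a Fano complete intersection of codimension $r$, Fano index $\iota(Y)\ge 4r+2\delta+2$, and singular locus of dimension at most $r+\delta$. The adjunction-type local volume inequality gives $\hvol(x,X)\ge\hvol(o,C)$, with $o$ the vertex of $C$. The vertex volume is controlled from below in terms of the Fano index $\iota(Y)$, the degree of $Y$, and the normalized volumes of $Y$ at its own singular points via a standard cone computation, and the high-index hypothesis makes this lower bound large. Combined with a log canonical threshold estimate at $x$ coming from the hypothesis that $Y$ is smooth in dimension $r+\delta$, this suffices to rule out both maximal singularities of $\cM$ and destabilizing valuations centered at $x$.

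The remaining task is to bound from below the local volumes of $Y$ at its singular points. Since $Y$ is again a Fano complete intersection of codimension $r$, but now of higher Fano index and lower dimension, the argument can be iterated: either $Y$ has mild enough singularities that the smooth theory applies, or one invokes an appropriate lower-dimensional instance of the theorem, with the index growth at each tangent-cone step ensuring the induction terminates. The main obstacle is the adjunction-type inequality itself, which must hold for arbitrary divisorial valuations at $x$ and not only the canonical cone valuation, so that passing to the affine tangent cone genuinely controls every possible destabilizing valuation; this is the technical heart of the paper. A secondary concern is careful bookkeeping of the numerical parameters across tangent-cone iterations, to confirm that the final constant $N$ really depends only on $(r,\delta)$.
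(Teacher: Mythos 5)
Your proposal diverges substantially from the paper's actual argument, and the divergence opens a gap that would not be easy to close.

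The paper proves Theorem~\ref{thm:cpi-ordinary} by first establishing the reduction in Theorem~\ref{thm:main criterion}: it suffices to bound from below the \emph{minimal non-lc and non-klt colengths} $\lnlc(x,Y)$, $\lnklt(x,Y)$ of general linear sections $Y\subseteq X$ of codimension $2r+\delta$ and $r+\delta$, respectively, by explicit binomial coefficients (and K-stability is obtained from birational superrigidity via the $\lct>1/2$ criterion of \cite{rigid-imply-stable}, not via the $\delta$-invariant as you suggest). The required colength bounds then come from Lemma~\ref{lem:non-klt colength}, which degenerates to the cone over the smooth tangent cone $V$ and reduces to the \emph{conditional birational superrigidity} of $V$ proved in \cite[Theorem~A.2]{Z-cpi}. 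Crucially, this conditional superrigidity statement is available for smooth Fano complete intersections of \emph{arbitrary} Fano index and any fixed codimension, with a constant depending only on index and codimension. No K-stability of the tangent cone, no local volume estimate, and no induction on the theorem are needed; the argument terminates in one step because the hypothesis (2) is arranged so that the tangent cone of a general codimension-$(2r+\delta)$ (or $(r+\delta)$) linear section is already smooth.

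The gap in your proposal is the step ``the vertex volume is controlled from below\ldots via a standard cone computation'' followed by the intended induction. To compute $\hvol(o,C)$ by the cone formula of \cite{LL-vol-min} one needs the projective tangent cone $V$ to be K-semistable, and $V$ here is a Fano complete intersection of codimension $r$ and Fano index $\geq 4r+2\delta+2$. K-stability of smooth Fano complete intersections of high index and codimension $r\geq 2$ is \emph{not} available in the literature (the results you would lean on, from \cite{Fujita-alpha} and the theorem in the introduction, cover index one or hypersurfaces), and the theorem you are trying to prove is itself stated only for index one, so it cannot be invoked inductively on $V$. The paper sidesteps exactly this obstruction by working with colengths and conditional superrigidity rather than local volumes and K-stability. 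It is fair to note that the paper \emph{does} use the local-volume route you outline, including Theorem~\ref{thm:adjunction}, but only in the proof of Theorem~\ref{thm:hypersurface-general}, and only after reducing to smooth tangent cones of \emph{hypersurfaces} (codimension $1$), where Fujita's $\alpha$-invariant criterion genuinely does give K-stability.
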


\begin{thm} \label{thm:hypersurface-general}
Let $\delta\ge -1$ and $m\ge 1$ be integers. Let $X\subseteq\bP^{n+1}$ be a hypersurface of degree $n+1$. Assume that 
    \begin{enumerate}
        \item The singular locus of $X$ has dimension at most $\delta$ and $n\ge \frac{3}{2}\delta+1$;
        \item $X$ has multiplicity $\e(x,X)\le m$ at every $x\in X$ and the corresponding tangent cone is smooth in codimension $\rme(x,X)-1$;
        \item $\frac{1}{m^m}\cdot \frac{(n-2-\delta)^{n-2-\delta}}{(n-2-\delta)!}\ge \binom{n+2}{\delta+3}$.
    \end{enumerate}
Then $X$ is birationally superrigid and K-stable.
\end{thm}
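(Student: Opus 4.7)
My plan is to deploy the inductive criterion that I expect has been established earlier in the paper: an effective local condition at every point $x \in X$ that simultaneously implies birational superrigidity and K-stability. Concretely, following the Pukhlikov--de Fernex philosophy for superrigidity and Fujita's $\alpha$-invariant/stability-threshold approach for K-stability, a suitable lower bound on a local invariant at every $x$ (say, on $\hvol(x,X)$ together with an $\alpha$-type estimate through $x$) should rule out maximal singularities for mobile linear systems $\mathcal{M}\subset|{-}\lambda K_X|$ and force the stability threshold to exceed $1$. Hypothesis (3) is precisely of the shape of such a bound: the factor $\frac{(n-2-\delta)^{n-2-\delta}}{(n-2-\delta)!}$ matches the asymptotic volume that appears after cutting $X$ down by roughly $\delta+2$ generic hyperplanes through $x$ to isolate the singularity, while $\binom{n+2}{\delta+3}$ controls the dimension of the space of such hyperplane sections one must use in order to avoid the singular locus of $X$ (which has dimension at most $\delta$).

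First I would treat a smooth point $x\in X$. Here I intersect $X$ with a generic linear subspace of codimension $\delta+2$ through $x$, which remains smooth at $x$ by hypothesis (1) (since $n\ge\tfrac{3}{2}\delta+1$ leaves room after the cut), and then I run the standard mobile-boundary/multiplicity argument of \cite{dF-singular,Suzuki-cpi} to bound $\mathrm{mult}_x\mathcal{M}$; the combinatorial factor $\binom{n+2}{\delta+3}$ quantifies how freely such generic sections can be chosen while still avoiding the singular locus, and its appearance on the right-hand side of (3) is exactly what turns the local multiplicity estimate into a genuine inequality.

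Next I would turn to a singular point $x\in X$ of multiplicity $\e=\e(x,X)\le m$. Here I invoke the adjunction-type inequality for local volumes promised in the abstract, which bounds $\hvol(x,X)$ from below in terms of an invariant of the projective tangent cone $Y\subset\bP^n$ of $X$ at $x$. Now $Y$ is a hypersurface of degree $\e$ in $\bP^n$, hence Fano of index $n+1-\e$, and hypothesis (2) asserts that $Y$ is smooth in codimension $\e-1$. An induction on $n$, or equivalently a high-index version of the same local criterion applied to $Y$, then yields a lower bound on its normalized volume of order $1/\e^{\e}\ge 1/m^m$, which is exactly the factor $1/m^m$ appearing in hypothesis (3). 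Combining the smooth-point and singular-point estimates verifies the local criterion at every $x\in X$.

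The main obstacle will be the singular-point step: executing the adjunction inequality cleanly enough that the loss incurred is bounded by the explicit constants in (3), and setting up the induction so that $Y$ inherits a form of hypothesis (2) compatible with the running hypothesis. Because $Y$ is only assumed smooth in codimension $\e-1$, its singular locus may again be positive-dimensional, so the inductive statement has to tolerate the same ``ordinary with bounded multiplicity'' class of singularities as $X$; matching the multiplicity bound, the codimension of $\mathrm{Sing}(Y)$, and the combinatorial estimate (3) uniformly along the descent is the most delicate part, and is where I expect the constants in the final statement to be forced.
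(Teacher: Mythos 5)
Your outline captures the high-level architecture correctly: there is an explicit local criterion (Corollary~\ref{cor:colength criterion}, obtained from Theorem~\ref{thm:main criterion} together with Lemma~\ref{lem:compare colengths}) asking that the normalized colength of a general codimension-$(\delta+2)$ linear section through each point exceed $\binom{n+2}{\delta+3}$, and the adjunction inequality of Theorem~\ref{thm:adjunction} is the engine that produces the required bound. You also correctly intuit that the factor $1/m^m$ should come from the tangent cone at a singular point. However, the way you propose to execute the singular-point step would not work, and it is genuinely different from what the paper does.

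You propose to look at the full projective tangent cone $Y\subset\mathbb{P}^n$ of $X$ at $x$, observe that it is a degree-$\e$ hypersurface of Fano index $n+1-\e$, and then "induct on $n$" or apply "a high-index version of the same local criterion" to $Y$. This runs into two problems. First, $Y$ has high Fano index, whereas the theorem you are trying to establish (and every criterion set up in the paper) concerns index-one complete intersections, so the inductive hypothesis does not apply to $Y$; there is no ready-made "high-index version" available. Second, $Y$ is only smooth in codimension $\e-1$, so it may still be singular, and you correctly flag this as a difficulty --- but the paper sidesteps the entire issue. The actual argument (Lemma~\ref{lem:colength-bounds}) first cuts down $X$ by a \emph{general} linear subspace to a surface-level section $Y$ of dimension exactly $m=\e(x,X)$; hypothesis (2) then guarantees that the tangent cone $V\subset\mathbb{P}^m$ of $Y$ at $x$ is a \emph{smooth} hypersurface of degree $m$, hence a smooth Fano hypersurface of index one. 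One degenerates $Y$ to the affine cone $C_p(V)$ via a $\mathbb{Q}$-Gorenstein flat family and invokes lower semicontinuity of local volumes (\cite{bl-semicont}), then applies Fujita's theorem that $V$ is K-stable and the cone formula of \cite{LL-vol-min} to compute $\hvol(p,C_p(V))=m$ exactly; finally, iterating Theorem~\ref{thm:adjunction} $n-m$ times climbs back up: $\hvol(x,X)/n^n\ge\hvol(x,Y)/m^m\ge m/m^m=1/m^{m-1}$. Note that the factor $1/m^m$ does not arise as "the normalized volume of the tangent cone of order $1/\e^\e$" as you suggest --- the cone's local volume is simply $m$ --- but rather from the multiplicative loss $n^n\mapsto m^m$ accumulated along the chain of adjunction inequalities. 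The crucial external input you are missing, which replaces your proposed induction entirely, is the already-known K-stability of smooth index-one Fano hypersurfaces due to Fujita, combined with the cone volume formula.

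Two smaller points: the split into smooth points and singular points is unnecessary --- the paper treats all points uniformly through the same Lemma~\ref{lem:colength-bounds} and criterion --- and the binomial $\binom{n+2}{\delta+3}$ is not counting hyperplane sections but is $h^0(\mathbb{P}^{n-1-\delta},\mathcal{O}(\delta+3))$, the dimension of global sections of the relevant line bundle on the linear section, which enters through Lemma~\ref{lem:lct>1/2}.
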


It may be interesting to look at the asymptotics of Theorem \ref{thm:hypersurface-general} as the dimension gets large. By Stirling's formula, the condition (3) can be replaced by following weaker inequality (at least when $n-\delta \gg 0$):
\[e^{n-2-\delta}\ge m^m (n+2)^{\delta+3}.
\]
After taking the logarithm, it is not hard to see that this inequality is satisfied if $\delta,m\le Cn^{1-\epsilon}$ for some fixed constants $C,\epsilon>0$ and $n\gg 0$. Hence our conditions are asymptotically better than those of \cite{dF-singular}.

Our proof of these three results has close ties to some local invariants of the singularities, i.e. the minimal non-klt (resp. minimal non-lc) colengths defined as follows.

\begin{defn}[\cite{Z-cpi}]
 Let $x\in (X,D)$ be a klt singularity. The minimal non-klt (resp. non-lc) colength of $x\in (X,D)$ is defined as
 \begin{eqnarray*}
    \lnklt(x,X,D) & := & \min\{\ell(\cO_X/\fa)\;|\;\Supp(\cO_X/\fa)=\{x\}\;\mathrm{and}\;(X,D;\fa)\;\mathrm{is}\;\mathrm{not}\;\mathrm{klt}\} \\
    (\mathrm{resp.}\quad \lnlc(x,X,D) & := & \min\{\ell(\cO_X/\fa)\;|\;\Supp(\cO_X/\fa)=\{x\}\;\mathrm{and}\;(X,D;\fa)\;\mathrm{is}\;\mathrm{not}\;\mathrm{lc}\}).
 \end{eqnarray*}
 When $D=0$, we use the abbreviation $\lnklt(x,X)$ (resp. $\lnlc(x,X)$).
\end{defn}

These local invariants govern the birational superrigidity and K-stability of index one complete intersection in a certain sense. More precisely, we prove the following criterion.

\begin{thm} \label{thm:main criterion}
Let $\delta\ge -1$ be an integer and let $X\subseteq\bP^{n+r}$ be a Fano complete intersection of index $1$, codimension $r$ and dimension $n$. Assume that
    \begin{enumerate}
        \item The singular locus of $X$ has dimension at most $\delta$ and $n\ge 2r+\delta+2$;
        \item For every $x\in X$ and every general linear space section $Y\subseteq X$ of codimension $2r+\delta$ containing $x$, we have $\lnlc(x,Y)> \binom{n+r+1}{2r+\delta+1}$.
    \end{enumerate}
Then $X$ is birationally superrigid. If in addition,
    \begin{enumerate}
        \setcounter{enumi}{2}
        \item For every $x\in X$ and every general linear space section $Y\subseteq X$ of codimension $r+\delta$ containing $x$, we have $\lnklt(x,Y)> \binom{n+r}{r+\delta}$,
    \end{enumerate}
then $X$ is also K-stable.
\end{thm}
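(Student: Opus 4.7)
The plan is to argue by contradiction in both parts, using the Noether--Fano criterion for birational superrigidity and the valuative criterion of Fujita--Li for K-stability, then cutting down to a lower-dimensional linear section $Y$ where the $\lnlc$ (resp.\ $\lnklt$) hypothesis can be invoked.

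\textbf{Setup and reduction.} Suppose $X$ is not birationally superrigid. By Noether--Fano, there exist a rational $c>0$ and a mobile linear system $\cM\subseteq|-cK_X|$ such that $(X,\tfrac{1}{c}\cM)$ is not canonical along some subvariety $Z\subseteq X$. Likewise, if $X$ is not K-stable then the valuative criterion supplies a mobile $\cM\subseteq|-cK_X|$ such that $(X,\tfrac{1}{c}\cM)$ is not klt at some point. In either case, fix a point $x\in X$ witnessing the failure.

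\textbf{Cutting down.} Take $Y$ a general linear section of $X$ of codimension $2r+\delta$ (resp.\ $r+\delta$) through $x$. By $\dim\mathrm{Sing}(X)\le\delta$, such a $Y$ is smooth away from $x$ and inherits mild singularities at $x$; by $n\ge 2r+\delta+2$, $\dim Y\ge 2$. The goal is to show that $(Y,\tfrac{1}{c}\cM|_Y)$ is not log canonical (resp.\ not klt) at $x$. For K-stability, this follows from Bertini together with inversion of adjunction for klt pairs. For birational superrigidity, combine an isolation argument in the spirit of Pukhlikov---using roughly $r+\delta$ of the cuts to reduce the non-canonical center to $\{x\}$---with further cuts that promote ``not canonical at $x$'' to ``not log canonical at $x$'' via repeated inversion of adjunction and multiplicity estimates for mobile linear systems (Corti's $4c^2$-type inequality).

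\textbf{Extracting an ideal of bounded colength.} Convert the non-lc (resp.\ non-klt) pair $(Y,\tfrac{1}{c}\cM|_Y)$ at $x$ into an $\fm_x$-primary ideal $\fa\subseteq\cO_{Y,x}$ with $(Y,\fa)$ still non-lc (resp.\ non-klt), by taking $\fa$ to be the local ideal generated by $\dim Y$ general members of $\cM|_Y$, truncated by $\fm_x^N$ for large $N$. Then establish $\ell(\cO_{Y,x}/\fa)\le\binom{n+r+1}{2r+\delta+1}$ (resp.\ $\le\binom{n+r}{r+\delta}$), which directly contradicts hypothesis (2) (resp.\ (3)). The colength estimate should stem from a Bezout-type computation in $\bP^{n+r}$ exploiting the complete intersection structure of $Y$, with the binomial coefficients arising as intersection numbers of appropriate cycle classes.

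\textbf{Main obstacle.} The hardest step is the sharp colength estimate: a naive Bezout bound scales with $c^{\dim Y}\cdot\deg Y$, which is far too weak. Matching the precise binomial coefficient will require a refined choice of generators---perhaps replacing members of $\cM|_Y$ with generic linear cuts in $\bP^{n+r}$ and using multiplier-ideal manipulations to preserve the non-lc/non-klt condition while controlling colength. Verifying that the cutting step yields not log canonical rather than merely not canonical at a potentially singular $x\in X$ is also delicate, since inversion of adjunction behaves less cleanly at singular points and must be supplemented by Kawakita-type results adapted to our setting.
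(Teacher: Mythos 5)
Your proposal correctly guesses the shape of the argument (cut down to a low-dimensional linear section, then contradict the $\lnlc$/$\lnklt$ hypothesis), and you correctly identify that the ``sharp colength estimate'' is the crux. But the mechanism you propose for that step is the wrong one, and this is a genuine gap, not a detail. You try to manufacture an $\fm_x$-primary ideal $\fa$ directly from general members of $\cM|_Y$ and bound $\ell(\cO_{Y,x}/\fa)$ by Bezout. That cannot work: the colength of such an ideal scales with $c^{\dim Y}\cdot\deg Y$ and with local multiplicities, and there is no reason for it to match the binomial coefficient, which in the paper has nothing to do with intersection numbers. It is simply $h^0(Y,L)$ for $L=(2r+\delta+1)H$ (resp.\ $(r+\delta)H$). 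The real engine is Lemma~\ref{lem:lct>1/2} (a special case of \cite[Theorem~3.3]{Z-cpi}), which is a global-to-local statement proved by Nadel vanishing: if $D\sim_\bQ L-K_Y$ and $(Y,D)$ is lc away from a finite set $T$, and if $h^0(Y,L)<\lnlc(x,Y)$ (resp.\ $\lnklt$) for all $x\in T$, then $\lct(Y;D)\ge\tfrac12$ (resp.\ $>\tfrac12$). In other words, the paper never constructs an ideal and estimates its colength; the $\lnlc/\lnklt$ quantities enter as \emph{inputs} to this vanishing-theorem lemma, and the binomial coefficients are $h^0$'s of $\cO_{\bP^{n-r-\delta}}(2r+\delta+1)$ and $\cO_{\bP^{n-\delta}}(r+\delta)$.

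Two further deviations are worth noting. First, for birational superrigidity the paper does not ``isolate the non-canonical center and promote non-canonical to non-lc''; instead it works with the \emph{doubled} boundary $2M$ throughout, using Suzuki's $\mult_S(M^2)\le1$ bound and \cite[Theorem~0.1]{dFEM-mult-and-lct} to get $(X,2M)$ lc outside a set of dimension $\le 2r+\delta$, then concludes via Lemma~\ref{lem:lct>1/2} that $\lct(Y;2M|_Y)\ge\tfrac12$, i.e.\ $(Y,M|_Y)$ is lc, and pulls back by inversion of adjunction. The factor $\tfrac12$ in the lemma and the doubling of $M$ are two halves of the same device; your plan has no analogue of this. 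Second, for K-stability the paper does not use a Fujita--Li valuative criterion directly but instead invokes the ``birational superrigidity plus $\lct(X;D)>\tfrac12$ for all $D\sim_\bQ-K_X$ implies K-stable'' result (Lemma~\ref{lem:rigid imply stable}), so the K-stability half piggybacks on the BSR half and on the same Lemma~\ref{lem:lct>1/2} mechanism with $D$ in place of $2M$. Your route via a valuative criterion would need to re-derive analogous uniform bounds from scratch. Finally, your sketch omits entirely the verification that $X$ is factorial, has $\rho(X)=1$, and has terminal singularities --- these are prerequisites for applying the Noether--Fano characterization (Theorem~\ref{thm:max singularity}) and are extracted in the paper from the codimension bound on $\mathrm{Sing}(X)$ and from hypothesis~(2) via inversion of adjunction.
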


When the singularities are mild, the corresponding minimal non-klt (resp. non-lc) colengths tend to be large and indeed they are often exponential in $n$ for a given singularity type. In particular, they grow faster than any polynomial in $n$ and the above criterion automatically applies to yield the birational superrigidity and K-stability of many complete intersections in large dimension. As such, the core of our argument consists of finding suitable lower bounds of minimal non-klt (resp. non-lc) colengths for the various singularities we encounter. This can be done in several different ways by relating these local invariants to the conditional birational superrigidity (as we do for Theorem \ref{thm:hypersurface-ordinary} and \ref{thm:cpi-ordinary}) or K-stability (as in the proof of Theorem \ref{thm:hypersurface-general}) of the tangent cone of the singular point, which is a Fano variety of smaller dimension. Hence essentially we obtain an inductive argument for proving birational superrigidity and K-stability using lower dimensional information.

A natural lower bound of the minimal non-klt (non-lc) colength is given by the local volume and normalized colength (see \S \ref{sec:local invariants} for notation and definition) of the singularity, as introduced in \cite{Li-normvol,bl-semicont}. While these numbers are hard to compute in general, we have an explicit formula for the cone over a K-semistable base as, by the work of \cite{LL-vol-min,lx16}, the local volume is computed by the blowup of the vertex in this case. By degeneration and semi-continuity argument, we then get an estimate of local volumes for certain ordinary singularities (indeed, a conjectural lower bound for every ordinary singularity as well). For general singularities, we may try to reduce to this case by taking hyperplane sections. Therefore, a key step in our approach is a comparison between the local volumes of the singularity and its hypersurface section.

\begin{thm}\label{thm:adjunction}
 Let $x\in (X,D)$ be a klt singularity of dimension $n$.
 Let $H$ be a normal reduced Cartier divisor of $X$ containing
 $x$. Assume that $H$ is not an irreducible component
 of $D$. Then we have
 \[
  \frac{\hvol(x,X,D)}{n^n}\geq \frac{\hvol(x,H,D|_H)}{(n-1)^{n-1}},
  \quad \frac{\hcol(x,X,D)}{n^n/n!}\geq \frac{\hcol(x,H,D|_H)}{(n-1)^{n-1}/(n-1)!}.
 \]
\end{thm}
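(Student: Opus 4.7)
My plan is to prove the $\hvol$ inequality; the $\hell$ version then follows formally, since Lech's inequality $\mult(\fa)\le n!\cdot\ell(\cO_X/\fa)$ combined with the Hilbert--Samuel asymptotic $\lct(\fa^k)^n\ell(\cO_X/\fa^k)\to \lct(\fa)^n\mult(\fa)/n!$ yields the identity $\hell(x,X,D) = \hvol(x,X,D)/n!$, and likewise on $H$; the two normalizations in the theorem differ precisely by this factor.

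For $\hvol$, I use the valuative characterization $\hvol(x,X,D) = \inf_v A_{X,D}(v)^n\vol_X(v)$ over real valuations $v\in\Val_{X,x}$. Fix such a $v$, set $a := v(H)>0$ (positive since $x$ belongs to the center of $v$), and let $h\in\cO_{X,x}$ be a local equation of $H$. For the valuation ideals $\fa_t(v):= \{f: v(f)\ge t\}$, the identity $\fa_t(v)\cap(h) = h\cdot \fa_{t-a}(v)$ together with $h$ being a nonzerodivisor yields a short exact sequence
\[
0\to \cO_{X,x}/\fa_{t-a}(v) \xrightarrow{\,\cdot h\,} \cO_{X,x}/\fa_t(v) \to \cO_{H,x}/\bar{\fa}_t \to 0,
\]
where $\bar{\fa}_t := \fa_t(v)\cdot \cO_{H,x}$ is the restriction of the filtration to $H$. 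Taking leading asymptotics $\ell(\cO_X/\fa_t(v)) \sim \vol_X(v)\,t^n/n!$ and using $t^n-(t-a)^n \sim an\,t^{n-1}$ gives
\[
\vol_H(\bar v) \;:=\; \lim_{t\to\infty}\frac{\ell(\cO_H/\bar{\fa}_t)}{t^{n-1}/(n-1)!} \;=\; a\cdot\vol_X(v).
\]

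The next step is to show $A_H(\bar v)\le A_{X,D}(v)-a$. For a divisorial $v=\ord_E$ extracted on a log resolution $\pi:X'\to X$ chosen so that the strict transform $\widetilde H$ of $H$ meets $E$ transversally along a prime divisor $\bar E\subset\widetilde H$, the restriction $\bar v$ is divisorial, equal to $\ord_{\bar E}$. Since $H$ is normal, Cartier, and not a component of $D$, the pair $(X,D+H)$ is plt along $H$, and standard adjunction $K_{\widetilde H} + \mathrm{Diff}_{\widetilde H}(D') = (K_{X'}+D'+\widetilde H)|_{\widetilde H}$ yields $A_H(\bar E) = A_{X,D+H}(E) = A_{X,D}(v)-v(H)$. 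General $v$ follows by approximation with quasi-monomial valuations.

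The argument is then completed by the elementary inequality
\[
A^n \;\ge\; \frac{n^n}{(n-1)^{n-1}}\,(A-a)^{n-1}\,a, \qquad 0\le a\le A,
\]
obtained by maximizing $t\mapsto t(1-t)^{n-1}$ on $[0,1]$, which attains its maximum $(n-1)^{n-1}/n^n$ at $t=1/n$. Applying this with $A = A_{X,D}(v)$ and $a=v(H)$ and combining with the previous two steps yields $\hvol_X(v) \ge \tfrac{n^n}{(n-1)^{n-1}}\,\hvol_H(\bar v) \ge \tfrac{n^n}{(n-1)^{n-1}}\,\hvol(x,H,D|_H)$, and taking infimum over $v$ finishes the proof. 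I expect the main obstacle to be the adjunction step: ensuring $\bar v$ is a genuine valuation on $H$ so that $A_H(\bar v)$ is defined, and that the bound $A_H(\bar v)\le A_{X,D}(v)-a$ passes to the limit over general real valuations. For divisorial $v$ from a plt blowup of $x$ this is direct; the general case would proceed by density of quasi-monomial valuations in $\Val_{X,x}$ together with lower semicontinuity of $A$ and $\vol$.
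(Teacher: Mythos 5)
The $\hvol$ inequality is attacked by a genuinely different route from the paper's. You work directly in $\Val_{X,x}$, restrict the valuation-ideal filtration of $v$ to $H$ via the exact sequence $0\to \cO_X/\fa_{t-a}\xrightarrow{\cdot h}\cO_X/\fa_t\to\cO_H/\bar\fa_t\to 0$, and combine the resulting volume identity with precise inversion of adjunction for log discrepancies and the AM--GM inequality $a(A-a)^{n-1}\le \tfrac{(n-1)^{n-1}}{n^n}A^n$. The paper instead degenerates $(x\in(X,D))$ along the extended Rees algebra of $\ord_H$ to $(H\times\bA^1, c(H\times\{0\})+\cdots)$, invokes lower semicontinuity of $\lct$ and $\hvol$ along the degeneration, and then computes the local volume of the product central fiber explicitly via $\bG_m$-invariant valuations $\tilde v_\xi$ (the same AM--GM step appears there, in the form $(a+(1-c)\xi)^n/\xi\ge \tfrac{n^n(1-c)}{(n-1)^{n-1}}a^{n-1}$). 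The degeneration route avoids the technical point you flag yourself -- whether a general $v\in\Val_{X,x}$ restricts to a genuine valuation $\bar v$ on $H$ with $A_{H,D|_H}(\bar v)\le A_{X,D}(v)-v(H)$ -- because on the $\bG_m$-invariant central fiber the structure of valuations is completely classified. Your direct argument is plausible for divisorial and quasi-monomial valuations, but the passage to general real valuations would need to be carried out carefully; the AM--GM core and the use of adjunction are correct.

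However, there is a genuine gap in the reduction of the $\hcol$ inequality to the $\hvol$ inequality. The claimed identity $\hcol(x,X,D)=\hvol(x,X,D)/n!$ is false at singular points. Lech's inequality in its correct form is $\e(\fa)\le n!\cdot\e(x,X)\cdot\ell(\cO_X/\fa)$, carrying the extra factor $\e(x,X)$; the version you quote (without $\e(x,X)$) holds only when $x$ is a smooth point. The paper's Proposition~\ref{prop:lech} records precisely this: one only has $\tfrac{1}{n!\,\e(x,X)}\hvol\le\hcol\le\tfrac{1}{n!}\hvol$ in general, with equality $\hvol=n!\,\hcol$ only at smooth points. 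Since the theorem is specifically about singularities, the $\hcol$ statement does not follow formally from the $\hvol$ statement and requires its own argument. (The paper handles this by proving a separate length estimate in Lemma~\ref{lem:product}, writing an ideal of $A[s]$ as $\fb_0+\fb_1 s+\cdots$ and bounding $\ell(A[s]/\fb)\ge\sum_i\ell(A/\fb_i)$ term by term; this does not pass through multiplicities at all.) You would need an analogous purely length-theoretic step -- for instance bounding $\ell(\cO_H/\bar\fa_t)$ against $\hcol(x,H,D|_H)/\lct(\bar\fa_t)^{n-1}$ at each level $t$ and summing, rather than passing to leading asymptotics -- to obtain the $\hcol$ half of the theorem.
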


One may interpret this as saying that ``local volume (resp. normalized colength) density'' does not increase upon taking hypersurface section, reflecting the principle that singularities can only get worse after restriction to a hypersurface.


This paper is organized as follows. We collect some preliminary materials in \S \ref{sec:prelim} and prove Theorem \ref{thm:main criterion} in \S \ref{sec:hyp-ordinary}. It is then applied to prove Theorem \ref{thm:hypersurface-ordinary} and Theorem \ref{thm:cpi-ordinary}. In \S \ref{sec:adjunction} we prove Theorem \ref{thm:adjunction} and finally Theorem \ref{thm:hypersurface-general} is proved in \S \ref{sec:hyp-general}.

\subsection*{Acknowledgement}
The first author would like to thank Chi Li and Chenyang Xu for helpful discussions. The second author would like to thank his advisor J\'anos Koll\'ar for constant support, encouragement and numerous inspiring conversations. He also wishes to thank Simon Donaldson for helpful comments and Charlie Stibitz for helpful discussion. We are also grateful to Kento Fujita, Fumiaki Suzuki and the anonymous referees for helpful comments. 

\section{Preliminary} \label{sec:prelim}

\subsection{Notation and conventions}

We work over $\bC$. By a pair $(X,D)$ we mean that $X$ is normal, $D$ is an effective $\bQ$-divisor, and $K_X+D$ is $\bQ$-Cartier. The notions of terminal, canonical, klt and log canonical (lc) singularities are defined in the sense of \cite[Definition 2.8]{mmp}. The tangent cone of a singularity is understood as the projective tangent cone throughout the note. Given a pair $(X,D)$ and an ideal sheaf $\fa$ (resp. a $\bQ$-Cartier divisor $\Delta$), the \emph{log canonical threshold} of $\fa$ (resp. $\Delta$) with respect to $(X,D)$ is denoted by $\lct(X,D;\fa)$ (resp. $\lct(X,D;\Delta)$). If $\fa$ is co-supported at a single closed point $x\in X$, we denote by $\e(\fa)$ the \emph{Hilbert-Samuel multiplicity} of $\fa$. We also denote $\e(x,X):=\e(\fm_x)$.

\subsection{Local invariants of singularities} \label{sec:local invariants}

\begin{defn}
 Let $x\in (X,D)$ be a klt singularity of dimension $n$.
 We define the \emph{local volume} $\hvol(x,X,D)$ and
 the \emph{normalized colength} $\hcol(x,X,D)$ of $x\in (X,D)$
 by
 \begin{align*}
  \hvol(x,X,D)&:=\inf_{\fa\colon\fm_x\textrm{-primary}}
  \lct(X,D;\fa)^{n}\cdot\e(\fa),\\
  \hcol(x,X,D)&:=\inf_{\fa\colon\fm_x\textrm{-primary}}
  \lct(X,D;\fa)^{n}\cdot\ell(\cO_{X,x}/\fa).
 \end{align*}
 If $x\in (X,D)$ is not klt, we set $\hvol(x,X,D)=\hcol(x,X,D)=0$. When $D=0$, we will simply write $\hvol(x,X)$ and $\hell(x,X)$.
\end{defn}

Note that our definition of local volume of a singularity
is equivalent to Li's original definition \cite{Li-normvol} in terms of valuations by \cite[Theorem 27]{Liu18}. A refined version
of normalized colengths was introduced in \cite{bl-semicont} in order to prove the lower semicontinuity of local volumes in families.

\begin{prop} \label{prop:lech}
 For any klt singularity  $x\in (X,D)$ of dimension $n$,
 we have
 \[
  \frac{1}{n!\e(x,X)}\hvol(x,X,D)\leq \hcol(x,X,D)\leq \frac{1}{n!}\hvol(x,X,D).
 \]
 If moreover $x$ is a smooth point on $X$, then 
 $\hvol(x,X,D)=n!\cdot\hcol(x,X,D)$.
\end{prop}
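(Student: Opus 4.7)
The plan is to deduce both inequalities from the classical Lech inequality together with the asymptotic behavior of multiplicity versus colength along the sequence $\fa^k$.

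First I would recall Lech's inequality: for any $\fm_x$-primary ideal $\fa$ in the local ring $(\cO_{X,x},\fm_x)$ of dimension $n$, one has
\[
\e(\fa)\;\leq\; n!\cdot\e(x,X)\cdot\ell(\cO_{X,x}/\fa).
\]
Multiplying by $\lct(X,D;\fa)^n$ and taking the infimum over $\fm_x$-primary $\fa$ yields the left-hand inequality
\[
\hvol(x,X,D)\;\leq\; n!\cdot\e(x,X)\cdot\hcol(x,X,D),
\]
which is exactly $\hvol/(n!\,\e(x,X))\leq \hcol$.

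For the other direction I would exploit the scaling behaviour of $\lct$, $\e$ and $\ell$ under taking powers. For a fixed $\fm_x$-primary ideal $\fa$ one has $\lct(X,D;\fa^k)=\lct(X,D;\fa)/k$, $\e(\fa^k)=k^n\e(\fa)$, and by Hilbert--Samuel
\[
\ell(\cO_{X,x}/\fa^k)\;=\;\frac{\e(\fa)}{n!}\,k^n+O(k^{n-1})
\]
as $k\to\infty$. Substituting gives
\[
\lct(X,D;\fa^k)^n\cdot\ell(\cO_{X,x}/\fa^k)\;=\;\frac{\lct(X,D;\fa)^n\,\e(\fa)}{n!}+O(1/k).
\]
Hence for each $\fa$ the quantity $\lct^n\cdot\ell$ computed along $\{\fa^k\}$ converges to $\lct(X,D;\fa)^n\,\e(\fa)/n!$, so taking the infimum over $\fa$ of the right-hand side majorizes $\hcol$, giving $\hcol\leq \hvol/n!$. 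The smooth case is then an immediate consequence: at a smooth point $\e(x,X)=1$, so the two inequalities collapse to the equality $\hvol=n!\cdot\hcol$.

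The only potentially delicate step is setting up Lech's inequality in the required generality; once that is available, the remainder is a formal manipulation with the infima, and no further obstacle arises. I would simply cite Lech's theorem in its standard local-algebra form (which applies to any Noetherian local ring of dimension $n$, in particular to $\cO_{X,x}$) and then carry out the two short calculations above.
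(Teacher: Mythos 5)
Your proof is correct and follows the same route as the paper: the left inequality by applying Lech's inequality pointwise and passing to infima, the right inequality by taking powers $\fa^k$ and using the Hilbert--Samuel asymptotics $n!\,\ell(\cO_{X,x}/\fa^k)=\e(\fa)k^n+O(k^{n-1})$ together with the scaling $\lct(\fa^k)=\lct(\fa)/k$, and the smooth case from $\e(x,X)=1$. You have simply spelled out the scaling bookkeeping that the paper leaves implicit.
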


\begin{proof}
 The first inequality follows from Lech's inequality \cite[Theorem 3]{lech}
 \[n!\cdot\ell(\cO_{X}/\fa)\cdot\e(x,X)\geq \e(\fa).\]
 The second inequality follows from the fact that
 $n!\ell(\cO_{X}/\fa^m)=\e(\fa)m^n+O(m^{n-1})$.
\end{proof}

\begin{lem} \label{lem:compare colengths}
Let $x\in (X,D)$ be a klt singularity. Then
\[\lnklt(x,X,D)\ge \hell(x,X,D), \quad \lnlc(x,X,D)>\hell(x,X,D).
\]
\end{lem}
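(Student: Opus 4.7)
The plan is to unpack the definitions and apply the defining infimum of $\hell$ to a carefully chosen $\fm_x$-primary ideal. The whole argument is essentially one line once the right ideal is plugged in.

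For the first inequality, I would start by choosing an $\fm_x$-primary ideal $\fa$ that realizes $\lnklt(x,X,D)$, i.e.\ such that $\Supp(\cO_X/\fa)=\{x\}$, $(X,D;\fa)$ is not klt, and $\ell(\cO_X/\fa)=\lnklt(x,X,D)$ (such an ideal exists since the colengths range over a nonempty subset of $\bZ_{\geq 0}$). The failure of klt says precisely $\lct(X,D;\fa)\leq 1$. Since $\hell(x,X,D)$ is the infimum of $\lct(X,D;\fb)^n\cdot\ell(\cO_{X,x}/\fb)$ over all $\fm_x$-primary $\fb$, plugging in $\fa$ yields
\[
\hell(x,X,D)\;\leq\;\lct(X,D;\fa)^n\cdot\ell(\cO_X/\fa)\;\leq\;\ell(\cO_X/\fa)\;=\;\lnklt(x,X,D).
\]

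For the second inequality, I would do the same with an ideal $\fa$ realizing $\lnlc(x,X,D)$. Now the failure of lc gives the \emph{strict} inequality $\lct(X,D;\fa)<1$, so
\[
\hell(x,X,D)\;\leq\;\lct(X,D;\fa)^n\cdot\ell(\cO_X/\fa)\;<\;\ell(\cO_X/\fa)\;=\;\lnlc(x,X,D),
\]
which is the desired strict inequality.

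The only thing that requires a brief justification is the existence of ideals achieving the minimum in $\lnklt$ and $\lnlc$: the colengths $\ell(\cO_X/\fa)$ take values in $\bZ_{\geq 0}$, and the sets appearing in the definitions are nonempty (for instance, a sufficiently high power of $\fm_x$ is neither klt nor lc), so the minima are attained. There is no real obstacle here; the lemma is essentially a translation between the ``non-klt/non-lc'' language and the ``$\lct\le 1$ vs.\ $\lct<1$'' language, and the strictness in the non-lc case is exactly what forces the strict inequality for $\lnlc$.
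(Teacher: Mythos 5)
Your proof is correct and matches the paper's argument essentially verbatim: both take an ideal $\fa$ achieving the minimum in the definition of $\lnklt$ (resp.\ $\lnlc$), observe that $\lct(X,D;\fa)\le 1$ (resp.\ $<1$), and plug into the infimum defining $\hell(x,X,D)$. Your remark about the existence of the minimizing ideal is a reasonable extra word of justification, but does not change the substance.
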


\begin{proof}
The minimal non-klt (resp. non-lc) colength of $x\in (X,D)$ is achieved by some ideal $\fa$. By definition, we have $\lct(X,D;\fa)\le 1$ (resp. $<1$), hence $\hell(x,X,D)\le \lct(X,D;\fa)^n\cdot \ell(\cO_X/\fa)\le$ (resp. $<$) $\ell(\cO_X/\fa) = \lnklt(x,X,D)$ (resp. $\lnlc(x,X,D)$).
\end{proof}

\subsection{Birational superrigidity} \label{sec:prelim-rigidity}

We refer to e.g. \cite[Definition 1.25]{Noether-Fano}) for the definition of birational superrigidity. We use the following equivalent characterization.

\begin{defn} \label{defn:movable boundary}
Let $(X,D)$ be a pair. A movable boundary on $X$ is defined as an expression of the form $a\cM$ where $a\in\bQ$ and $\cM$ is a movable linear system on $X$. Its $\bQ$-linear equivalence class is defined in an evident way. If $M=a\cM$ is a movable boundary, we say that the pair $(X,D+M)$ is klt (resp. canonical, lc) if for $k\gg 0$ and for general members $D_1,\cdots,D_k$ of the linear system $\cM$, the pair $(X,D+M_k)$ (where $M_k=\frac{a}{k}\sum_{i=1}^k D_i$) is klt (resp. canonical, lc) in the usual sense. For simplicity, we usually do not distinguish the movable boundary $M$ and the actually divisor $M_k$ for suitable $k$.
\end{defn}

\begin{thm}[{\cite[Theorem 1.26]{Noether-Fano}}] \label{thm:max singularity}
Let $X$ be a Fano variety. Then it is birationally superrigid if and only if it has $\bQ$-factorial terminal singularities, Picard number one, and for every movable boundary $M\sim_\bQ -K_X$ on $X$, the pair $(X,M)$ is canonical.
\end{thm}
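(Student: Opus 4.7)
My plan is to follow the classical Noether--Fano--Iskovskikh method and handle the two implications separately.

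For the reverse direction, assume the three conditions and let $\varphi\colon X\dashrightarrow Y$ be a birational map to a Mori fiber space $\pi\colon Y\to S$; the goal is to show $\varphi$ is biregular and $S=\{\mathrm{pt}\}$. Choose a sufficiently positive movable linear system $\cH_Y$ on $Y$ whose $\bQ$-normalization $M_Y$ satisfies $K_Y + M_Y \sim_{\pi,\bQ} 0$, and let $\cM:=\varphi_*^{-1}\cH_Y$ be its strict transform on $X$. Since $\rho(X)=1$ we have $\cM\sim_\bQ -nK_X$ for some $n\in\bQ_{>0}$, so $M:=\tfrac{1}{n}\cM$ is a movable boundary with $M\sim_\bQ -K_X$; by hypothesis $(X,M)$ is canonical. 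Take a common log resolution $p\colon W\to X$, $q\colon W\to Y$, and compare the two expressions
\begin{align*}
K_W+p_*^{-1}M&=p^*(K_X+M)+\sum_i a_i E_i,\\
K_W+q_*^{-1}M&=q^*(K_Y+M_Y)+\sum_j b_j F_j.
\end{align*}
Canonicity gives $a_i\ge 0$ for $p$-exceptional $E_i$, while terminality of $Y$ gives $b_j>0$ for $q$-exceptional $F_j$. Using $p^*(K_X+M)\sim_\bQ 0$ and $q^*(K_Y+M_Y)\sim_{\pi,\bQ}0$, subtracting and pushing down to $Y$ forces $S=\{\mathrm{pt}\}$ and that the $p$- and $q$-exceptional divisors coincide; combined with $\rho(X)=\rho(Y)=1$, this yields biregularity of $\varphi$.

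For the forward direction, birational superrigidity implies in particular that $X$ itself is a Mori fiber space in its birational class, so $X$ is $\bQ$-factorial terminal with $\rho(X)=1$. If some movable $M\sim_\bQ -K_X$ gave a non-canonical pair $(X,M)$, one could extract a divisorial valuation $E$ of negative discrepancy and run an appropriate two-ray MMP (Sarkisov-type construction) on the resulting pair to produce a Mori fiber space $Y/S$ birational but not biregular to $X$, contradicting superrigidity.

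The main obstacle is the discrepancy comparison in the reverse direction: one must choose $\cH_Y$ so that its normalization is numerically trivial over $S$, and then carefully leverage the MFS structure to convert the inequalities $a_i\ge 0$, $b_j>0$ into an actual equality of exceptional loci. The argument is standard in spirit but relies on all three assumptions working in concert---$\bQ$-factorial terminality to make sense of movable boundaries and discrepancies, Picard number one to normalize $\cM$ to a multiple of $-K_X$, and canonicity to bound discrepancies on the $X$-side---which is precisely what makes the characterization sharp.
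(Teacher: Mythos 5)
The paper does not prove this statement; it is imported from \cite[Theorem 1.26]{Noether-Fano} and used as a black box in the proof of Theorem \ref{thm:main criterion}, so there is no in-paper argument to compare against. Your sketch follows the standard Noether--Fano/Sarkisov route, which is indeed how the result is established in the cited source.

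A couple of places in the sketch are genuinely incomplete and would need repair in a real proof. In the reverse direction, the two displayed discrepancy formulas do not have equal left-hand sides: writing $\cH_Y\sim_{\bQ}-rK_Y$ relative to $S$ so that $M_Y=\tfrac1r\cH_Y$, the common strict transform $\cH_W:=p_*^{-1}\cM=q_*^{-1}\cH_Y$ satisfies $p_*^{-1}M=\tfrac1n\cH_W$ while $q_*^{-1}M_Y=\tfrac1r\cH_W$, so one cannot simply ``subtract and push down'' as stated. The comparison must be carried out at a single coefficient (say $\tfrac1n\cH_W$ on both sides), and then the sign of $K_Y+\tfrac1n\cH_Y$ over $S$ in the trichotomy $n<r$, $n=r$, $n>r$, together with the Negativity Lemma over $Y$, is what rules out $\dim S>0$ and forces $n=r$; canonicity of $(X,M)$ alone does not yield this. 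Moreover, coincidence of the exceptional loci only gives that $\varphi$ is small; one still needs $\bQ$-factoriality and the ampleness of $-K_X$ and $-K_Y$ (via the anticanonical models) to upgrade a small birational map to an isomorphism. In the forward direction, the claim that the constructed two-ray link lands on a Mori fiber space not biregular to $X$ requires the strict decrease of the Sarkisov degree, or an equivalent untwisting statement; a priori the link could return $X$ with an automorphism, which would not contradict superrigidity. These gaps are exactly the technical content of the theorem you are citing, so the sketch identifies the right strategy but does not yet constitute a proof.
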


\subsection{K-stability}

We refer to \cite{Tian-K-stability-defn,Don-K-stability-defn} for the definition of K-stability. We only need the following criterion in this note.

\begin{lem} \label{lem:rigid imply stable} \cite[Theorem 1.2]{rigid-imply-stable}
Let $X$ be a birationally superrigid Fano variety. Assume that $\lct(X;D)>\frac{1}{2}$ for every effective divisor $D\sim_\bQ -K_X$, then $X$ is K-stable.
\end{lem}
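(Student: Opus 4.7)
The plan is to use the valuative (Fujita--Li) criterion for K-stability: $X$ is K-stable if and only if $A_X(E) > S_X(E)$ for every prime divisor $E$ over $X$, where $S_X(E) = \frac{1}{(-K_X)^n}\int_0^{\tau(E)}\vol(-K_X - tE)\,dt$, or equivalently $\delta(X) > 1$ for the stability threshold. By Theorem~\ref{thm:max singularity}, birational superrigidity of $X$ guarantees $\bQ$-factorial terminal singularities and Picard number one, so $-K_X$ is ample and $|-mK_X|$ is base-point free for $m$ sufficiently divisible.

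First I would establish $\delta(X) \geq 1$ using birational superrigidity: for large $m$, a generic $m$-basis-type divisor $D_m$ realizes (up to small perturbation) an element of the movable boundary $\frac{1}{m}|-mK_X| \sim_\bQ -K_X$. By Theorem~\ref{thm:max singularity}, $(X, D_m)$ is canonical, so $\lct(X; D_m) \geq 1$, giving $\delta_m(X) \geq 1$ and hence $\delta(X) \geq 1$. To upgrade to strict $\delta(X) > 1$, fix a prime divisor $E$ over $X$ and consider $m$-basis-type divisors $D_m$ compatible with the filtration of $H^0(-mK_X)$ by $\ord_E$, so that $\ord_E(D_m) \to S_X(E)$. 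The strategy is to decompose the filtered linear systems $|-mK_X - jE|$ into movable and fixed parts; on each movable piece, rescale to a movable linear system $\sim_\bQ -K_X$ and apply birational superrigidity to control the contribution to $\ord_E$; on each fixed piece, invoke $\alpha(X) > 1/2$ (using Picard rank one to rescale the fixed class to $\sim_\bQ -K_X$) to control its contribution \emph{strictly}. Integrating/summing over the filtration should then yield $S_X(E) < A_X(E)$.

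The main obstacle is extracting a strict inequality from two inputs that individually give only non-strict bounds. Basis-type divisors compatible with a non-trivial filtration are not general members of a single movable linear system, so Theorem~\ref{thm:max singularity} does not apply to them directly; one must either work piecewise on the filtered graded pieces (each of which gives a movable linear system of class proportional to $-K_X$) and recombine using the $\alpha$-invariant bound on the fixed contributions, or else use an Okounkov-body / Newton--Okounkov decomposition coupled with a Tian-type estimate. Balancing the movable and fixed contributions to obtain a uniform quantitative improvement, when $\alpha(X)$ is only barely above $1/2$, is the technical heart of the argument.
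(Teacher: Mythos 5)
The paper does not prove this lemma; it is cited verbatim from Stibitz--Zhuang \cite{rigid-imply-stable}, so there is no in-paper argument to compare against. Judged against the cited proof, your strategy has the right skeleton: verify the Fujita--Li valuative criterion by, for each prime divisor $E$ over $X$, splitting the relevant divisors $\sim_\bQ -K_X$ into a movable part and a fixed part (possible by $\rho(X)=1$, which superrigidity supplies), then controlling the movable part via the canonicity statement of Theorem~\ref{thm:max singularity} and the fixed part via the $\lct$-hypothesis. This movable/fixed dichotomy together with $\rho=1$ is indeed the engine of the cited argument.

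That said, the proposal stops exactly where the proof begins. You write that ``balancing the movable and fixed contributions to obtain a uniform quantitative improvement $\dots$ is the technical heart of the argument,'' and you leave precisely that step unargued. Concretely, after writing the fixed part as $N\sim_\bQ c(-K_X)$ and the movable part as $M\sim_\bQ(1-c)(-K_X)$, one must feed the inequalities $\ord_E(M)\le (1-c)(A_X(E)-1)$ (from canonicity) and $\ord_E(N)<2c\,A_X(E)$ (from $\lct>\tfrac12$) into an estimate on $S_X(E)$ and show the integral comes out strictly below $A_X(E)$; appealing vaguely to Okounkov bodies or ``a Tian-type estimate'' does not discharge this. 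Without that computation there is no proof.

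A secondary issue is the repeated framing via $\alpha(X)>\tfrac12$ and the worry about ``$\alpha(X)$ only barely above $1/2$.'' The lemma's hypothesis is that $\lct(X;D)>\tfrac12$ for \emph{each} $D\sim_\bQ -K_X$; this permits $\alpha(X)=\tfrac12$, so no uniform gap is available. The strict inequality $A_X(E)>S_X(E)$ in the cited proof comes from applying the per-divisor strictness to the specific fixed part $N$ occurring in the decomposition for the given $E$, not from a global bound $\alpha(X)>\tfrac12$. Reformulating the hypothesis as $\alpha(X)>\tfrac12$ would both strengthen the assumption unnecessarily and obscure where the strictness actually enters. (Also, ``$X$ K-stable iff $\delta(X)>1$'' was not available at the time of \cite{rigid-imply-stable}; the criterion used there is $A_X(E)>S_X(E)$ for all dreamy prime divisors $E$, which is what the decomposition argument establishes directly.)
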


\section{Hypersurfaces with ordinary singularities} \label{sec:hyp-ordinary}

We start with the proof of Theorem \ref{thm:main criterion}, which relies on the following result:

\begin{lem} \label{lem:lct>1/2}
Let $X$ be a projective variety with klt singularities and $L$ an ample line bundle on $X$. Let $D\sim_\bQ L-K_X$ be a divisor on $X$ such that $(X,D)$ is log canonical outside a finite set $T$ of points. Assume that $h^0(X,L) < \lnklt(x,X)$ $($resp. $< \lnlc(x,X))$ for every $x\in T$, then $\lct(X;D)>\frac{1}{2}$ $($resp. $\ge \frac{1}{2})$.
\end{lem}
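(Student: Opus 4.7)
The plan is to argue by contradiction: assume $\lct(X;D)\leq \tfrac{1}{2}$ in the first case (resp.\ $\lct(X;D)<\tfrac{1}{2}$ in the second), and construct an $\mathfrak{m}_x$-primary ideal at some point $x\in T$ which witnesses non-kltness (resp.\ non-lcness) with colength at most $h^0(X,L)$. This gives $\lnklt(x,X)\leq h^0(X,L)$ (resp.\ $\lnlc(x,X)\leq h^0(X,L)$), contradicting the hypothesis.

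Here is how the steps would go. First, since $(X,D)$ is lc outside the finite set $T$ and the relevant coefficient $c\leq\tfrac{1}{2}$ (resp.\ $c<\tfrac{1}{2}$) only improves singularities, the non-klt (resp.\ non-lc) locus of $(X,cD)$ is concentrated in $T$, and I can pick $x$ in this locus. Second, form the multiplier ideal $\mathcal{J}:=\mathcal{J}(X,cD)$ (respectively the non-lc ideal $\mathcal{J}_{\mathrm{nlc}}(X,cD)$ in the lc case); it is a coherent ideal sheaf with $\mathcal{O}_X/\mathcal{J}$ of finite length and support contained in $T$. Third, twist the defining sequence by $L$,
\[
0\to \mathcal{J}\otimes L \to L \to (\mathcal{O}_X/\mathcal{J})\otimes L \to 0,
\]
and apply Nadel vanishing. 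This is valid because $L-K_X-cD\sim_{\bQ}(1-c)(L-K_X)=(1-c)D$ can be arranged to be nef and big (directly in the Fano-type setting that the lemma is ultimately applied to, and by an arbitrarily small perturbation of the coefficient in general, absorbed by the strict inequalities in the hypothesis). The vanishing $H^1(X,\mathcal{J}\otimes L)=0$ makes the evaluation map surject onto $H^0(X,(\mathcal{O}_X/\mathcal{J})\otimes L)=\bigoplus_{y\in T}(\mathcal{O}_X/\mathcal{J})_y$, and in particular $\ell((\mathcal{O}_X/\mathcal{J})_x)\leq h^0(X,L)$.

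Fourth, I extract from $\mathcal{J}_x\subset \mathcal{O}_{X,x}$ an $\mathfrak{m}_x$-primary ideal $\mathfrak{a}$ with the same colength bound that witnesses non-kltness (resp.\ non-lcness). The strategy here is tie-breaking: perturb $cD$ by a general section of a high power of $L$ vanishing at every $y\in T\setminus\{x\}$ so that the non-klt (resp.\ non-lc) locus is isolated to exactly the point $x$, after which the localized multiplier (resp.\ non-lc) ideal serves directly as an $\mathfrak{m}_x$-primary witness with colength controlled by $\ell((\mathcal{O}_X/\mathcal{J})_x)$.

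The main obstacle I expect is the fourth step: packaging the global multiplier-ideal data into a local $\mathfrak{m}_x$-primary non-klt (resp.\ non-lc) ideal without inflating the colength. Tie-breaking is the standard remedy, but care is needed to check that the perturbation does not destroy the bound coming from Nadel. A secondary technical point is Nadel vanishing when $L-K_X$ is not automatically nef and big; since the lemma will be applied to Fano complete intersections and their linear sections, this is not a serious issue in practice.
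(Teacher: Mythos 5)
The paper itself does not give an argument here: its entire proof is the one-line citation ``This is a special case of [Z-cpi, Theorem 3.3] (with $\Delta=0$ and $\lambda=1$),'' so there is no in-paper argument to match against. Your Nadel-vanishing strategy is in the right spirit, but Step 4 contains a genuine gap that your tie-breaking suggestion does not close.

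The problem is that the localized multiplier ideal $\cJ_x=\cJ(X,cD)_x$ need not witness non-kltness of $(X,\cdot)$ at all, i.e.\ $(X,\cJ_x)$ can perfectly well be klt even though $(X,cD)$ is not klt at $x$. Concretely, take $X=\bA^2$, $D=L_1+L_2+L_3+L_4$ four distinct lines through the origin, and $c=\tfrac12$. Then $(X,\tfrac12 D)$ is lc but not klt at $0$, while a direct computation on the blow-up of the origin gives $\cJ(X,\tfrac12 D)=\fm_0$, and $\lct(\bA^2;\fm_0)=2>1$, so $(X,\fm_0)$ is klt. Thus $\ell(\cO_{X}/\cJ_x)$ can be strictly smaller than $\lnklt(x,X)$, and the inequality $\ell(\cO_X/\cJ_x)\le h^0(X,L)$ coming from Nadel surjectivity does not by itself contradict the hypothesis $h^0(X,L)<\lnklt(x,X)$. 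Tie-breaking by a section of $L^{\otimes k}$ through $T\setminus\{x\}$ leaves the stalk $\cJ_x$ unchanged, so it isolates the non-klt locus but does nothing to upgrade $\cJ_x$ to an actual non-klt witness; a tie-break through $x$ itself would alter $\cJ_x$ in the wrong direction (making it deeper, hence losing control of the colength). The same issue persists in the lc case: the non-lc ideal defines the non-lc locus scheme-theoretically, but it is not automatic that $(X,\cJ_{\mathrm{nlc}}(X,cD)_x)$ is itself non-lc. So the passage from ``$\cJ_x$ has small colength'' to ``$\lnklt(x,X)$ (resp.\ $\lnlc(x,X)$) is small'' requires a more careful construction of the witnessing ideal than you provide, and this is precisely the substantive content hiding behind the citation.

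A secondary, smaller concern: Nadel vanishing for $\cJ(X,cD)\otimes L$ needs $L-K_X-cD\sim_\bQ(1-c)(L-K_X)$ to be nef and big. As you note, this holds in the intended applications, but the general statement does not assume $L-K_X$ nef, and shrinking $c$ slightly cannot repair nefness of $(1-c)(L-K_X)$ if $L-K_X$ itself fails to be nef. The honest fix is to add the positivity hypothesis (or to say that it is always satisfied in the applications, which it is, since $L-K_Y$ is a positive multiple of the hyperplane class for the linear sections $Y$ in question), rather than to invoke a coefficient perturbation.
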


\begin{proof}
This is a special case of \cite[Theorem 3.3]{Z-cpi} (with $\Delta=0$ and $\lambda=1$).
\end{proof}

\begin{proof}[Proof of Theorem \ref{thm:main criterion}]
We first prove the birational superrigidity of $X$ under the assumptions (1) and (2). As $n\ge 2r+\delta+2\ge \delta+4$, the singular locus of $X$ has codimension at least $4$ and by \cite[Corollaire 3.14]{SGA2}, $X$ is factorial. We also have $\rho(X)=1$ since the cone over $X$ is also factorial by another application of \cite[Corollaire 3.14]{SGA2}. By assumption (2), for every $x\in X$, a general complete intersection of codimension $2r+\delta> \delta+1$ containing $x$ has klt singularities, hence by inversion of adjunction, $X$ has terminal singularities (note that if $x\in X$ is an isolated singularity and is not terminal, then a hyperplane section containing $x$ is not klt). By Theorem \ref{thm:max singularity}, it remains to show that the pair $(X,M)$ is canonical for every movable boundary $M\sim_\bQ -K_X$.

Let $M\sim_\bQ -K_X$ be a movable boundary. By \cite[Proposition 2.1]{Suzuki-cpi}, we have $\mult_S(M^2)\le 1$ for every subvariety $S$ of dimension $\ge 2r$ in the smooth locus of $X$. It follows that $\mult_x(M^2)\le 1$ outside a subset $Z$ of dimension $\le 2r+\delta$ (since a general complete intersection of codimension $\delta+1$ in $X$ is smooth). By \cite[Theorem 0.1]{dFEM-mult-and-lct}, $(X,2M)$ is log canonical outside $Z$. Let $x\in X$ and let $Y= X\cap \bP^{n-r-\delta}$ be a general linear space section of codimension $2r+\delta$ containing $x$, then $(Y,2M|_Y)$ is lc outside a finite set of points. We have $K_Y+2M|_Y\sim_\bQ (2r+\delta+1)H=L$ where $H$ is the hyperplane class. By Lemma \ref{lem:lct>1/2} and the assumption (2), we have $\lct(Y;2M|_D)\ge\frac{1}{2}$ since $h^0(Y,L)\le h^0(\bP^{n-r-\delta},\cO_{\bP^{n-r-\delta}}(2r+\delta+1))=\binom{n+r+1}{2r+\delta+1}$. Therefore, $(Y,M)$ is log canonical and by inversion of adjunction as before, $(X,M)$ has canonical singularities and hence $X$ is birationally superrigid.

Now assume that $X$ also satisfies (3). By Lemma \ref{lem:rigid imply stable}, $X$ is K-stable as long as $\lct(X;D)>\frac{1}{2}$ for every $D\sim_\bQ -K_X$. By \cite[Proposition 2.1]{Suzuki-cpi}, we have $\mult_S(D)\le 1$ for every subvariety $S$ of dimension $\ge r$ in the smooth locus of $X$ hence $\mult_x(D)\le 1$ and $(X,D)$ is log canonical outside a subset $Z$ of dimension $\le r+\delta$. Let $x\in X$ and let $Y$ be a general linear space section of codimension $r+\delta$ containing $x$, then $(Y,D|_Y)$ is lc outside a finite set of points. Let $L=(r+\delta)H\sim_\bQ K_Y+D|_Y$, then as 
\[h^0(Y,L)\le h^0(\bP^{n-\delta},\cO_{\bP^{n-\delta}}(r+\delta))=\binom{n+r}{r+\delta}<\lnklt(x,Y)
\]
by assumption (3), we get $\lct(Y,D|_Y)>\frac{1}{2}$ by Lemma \ref{lem:lct>1/2}, hence by inversion of adjunction we get $\lct(X;D)>\frac{1}{2}$ as desired.  
\end{proof}

Using the above criterion, we can now give the proof of Theorem \ref{thm:hypersurface-ordinary} and Theorem \ref{thm:cpi-ordinary}.

\begin{lem} \label{lem:non-klt colength}
Given $m,r\in\bZ_+$, then there exists a constant $N_0$ depending only on $m$ and $r$ such that for every ordinary canonical complete intersection singularity $x\in X$ of dimension $n\ge N_0$ and embedding codimension $r$ whose tangent cone has Fano index at least $m$, we have
\[\lnklt(x,X)\ge \binom{n-1}{m-1}, \quad \lnlc(x,X)\ge \binom{n-1}{m}.
\]
\end{lem}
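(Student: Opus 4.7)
My plan is to degenerate $X$ at $x$ to the vertex $o$ of the affine cone $C = C_a(V, L)$ over the projective tangent cone $V \subset \bP^{n+r-1}$, a smooth Fano complete intersection of codimension $r$, dimension $n-1$, and Fano index $m_V \ge m$ (with $L = \cO_V(1)$). The Rees degeneration of the $\fm_x$-adic filtration yields a flat family specializing $(X, x)$ to $(C, o)$, and identifies $\mathrm{gr}_{\fm_x}\cO_{X,x}$ with the homogeneous coordinate ring $R = \bigoplus_k H^0(V, kL)$ of $V$. Since this degeneration preserves colengths and log canonical thresholds are lower semicontinuous in flat families, any $\fm_x$-primary $\fa \subset \cO_{X,x}$ with $(X, \fa)$ non-klt (resp.\ non-lc) degenerates to a homogeneous $\fm_o$-primary ideal of the same colength with the same property on $C$. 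By the $\bG_m$-action, it suffices to prove $\lnklt(o, C) \ge \binom{n-1}{m-1}$ and $\lnlc(o, C) \ge \binom{n-1}{m}$ for homogeneous $\fa = \bigoplus_{k \ge k_0} \fa_k$ on $C$.

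Let $\pi \colon Y \to C$ be the blowup of the vertex, with exceptional divisor $E \cong V$ and $A_C(E) = m_V$. Two cases arise depending on $k_0 := v_E(\fa)$. In the \emph{main case} $k_0 \ge m_V$ (resp.\ $k_0 \ge m_V + 1$ for the non-lc variant), one has $\fa \subseteq \fm_o^{m_V}$ (resp.\ $\fm_o^{m_V+1}$), so
\[
\ell(\cO_C/\fa) \ge \sum_{k=0}^{m_V - 1} h^0(V, kL) \ge h^0(V, (m-1)L),
\]
and analogously $\ell(\cO_C/\fa) \ge h^0(V, m L)$ in the non-lc case. Expanding via the Koszul resolution of $V \subset \bP^{n+r-1}$ expresses $h^0(V, (m-1)L)$ as a polynomial in $n$ of degree $m-1$ with leading term $n^{m-1}/(m-1)!$, matching the leading asymptotics of $\binom{n-1}{m-1}$; a direct comparison of the lower-order corrections yields $N_0$ depending on $m$ and $r$ with $h^0(V, (m-1)L) \ge \binom{n-1}{m-1}$ and $h^0(V, m L) \ge \binom{n-1}{m}$ whenever $n \ge N_0$.

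In the \emph{residual case} $k_0 < m_V$, a $\bG_m$-equivariant analysis of divisorial valuations $v = a v_E + w$ on $C$ translates the non-klt condition $A_C(v) \le v(\fa)$ into the existence of a singular section $f \in \fa_{k^*}$ with $\lct(V, \mathrm{div}(f)) \le 1$ for a divisor in $|k^* L|$. When $k^* < m_V$, rescaling to $|{-K_V}|$ yields $\alpha(V) \le k^*/m_V \le (m_V - 1)/m_V$, which is excluded by the lower bound $\alpha(V) > (m_V - 1)/m_V$ valid for smooth Fano complete intersections of Fano index $m_V \ge m$ in dimension $\ge N_0(m, r)$ via Pukhlikov/Fujita/Zhuang-type estimates. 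When $k^* \ge m_V$, the ideal-theoretic containment $\fa_{k_0} \cdot R_{k^* - k_0} \subseteq \fa_{k^*}$ forces substantial base-locus conditions on the lower-degree pieces $\fa_k$ for $k_0 \le k < m_V$, which via Koszul cohomology estimates again yields colengths on the order of the main-case bounds. The principal obstacle is the residual case: securing the required lower bound on $\alpha(V)$ for smooth Fano complete intersections of index $\ge m$ in large dimension, and cleanly executing the $\bG_m$-equivariant inversion of adjunction argument that produces the singular section.
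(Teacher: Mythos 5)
Your degeneration to the cone $C$ over the tangent cone $V$ and the reduction to homogeneous $\fm_o$-primary ideals are exactly the paper's first step. After that, however, you take a different and ultimately incomplete route, and the residual case ($k_0 := v_E(\fa) < m_V$) contains a genuine gap.

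The key problem is the bound you invoke to exclude $k^* < m_V$: you need $\alpha(V) > (m_V-1)/m_V$ for smooth Fano complete intersections of index $m_V \ge m$, but this is false. For example, take $V = Q \subseteq \bP^n$ a smooth quadric of dimension $n-1$ and index $m_V = n-1$; taking $D = (n-1)Y'$ with $Y' \in |\cO_Q(1)|$ a smooth hyperplane section gives $\lct(Q,D) = 1/(n-1)$, so $\alpha(Q) \le 1/(n-1)$, which is far below $(n-2)/(n-1)$ once $n\ge 4$. More generally the alpha invariant of a high-index Fano is typically small (of order $1/m_V$, from a multiple of a general hyperplane section), so it cannot provide the barrier you want. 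The subsequent $k^* \ge m_V$ subcase, handled via ``Koszul cohomology estimates on lower-degree pieces,'' is also not an argument: the containment $\fa_{k_0}\cdot R_{k^*-k_0}\subseteq\fa_{k^*}$ by itself yields no quantitative colength bound.

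The paper avoids valuations and the $v_E$-dichotomy entirely. Writing the homogeneous ideal as $\fa = \oplus_i \cM_i$ with $\cM_i\subseteq H^0(V,\cO_V(i))$, it bounds $\ell(R/\fa)\ge h^0(V,\cO_V(m))-\dim\cM_m$ and then makes a dichotomy on the base locus $Z$ of $\cM_m$. If every component of $Z$ has codimension $\ge m+1$, then by the conditional birational superrigidity result \cite[Theorem A.2]{Z-cpi} (a proven theorem, not a conjectural alpha bound) the pair $(V,\cM_m)$ is canonical for $n\gg 0$, and since $-(K_V+\cM_m)$ is nef the cone is lc, contradicting the choice of $\fa$. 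Otherwise some component $Z_0$ has $\dim Z_0\ge n-1-m$, and a general linear projection $\pi\colon V\dashrightarrow\bP^{n-1-m}$ finite on $Z_0$ produces an $\binom{n-1}{m}$-dimensional subspace $\cN=\pi^*|\cO(m)|$ meeting $\cM_m$ trivially, giving the colength bound outright. This works uniformly, with no splitting on $k_0$. Your ``main case'' estimate $h^0(V,(m-1)L)\ge\binom{n-1}{m-1}$ for $n\gg 0$ is fine as far as it goes, but it only covers the favorable half of the dichotomy; the paper's linear-projection trick and the conditional superrigidity input are the ingredients you are missing to close the argument.
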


\begin{proof}
For simplicity we only prove the inequality for minimal non-lc colength, since the other case is very similar. By \cite[Theorem A.2]{Z-cpi}, there exists a constant $N_0$ depending only on $m$ and $r$ such that for every smooth Fano complete intersection (in some $\bP^N$) of codimension $r$ and dimension at least $N_0-1$ and every movable boundary $M\sim_\bQ mH$ whose base locus has codimension at least $m+1$ (where $H$ is the hyperplane class), the pair $(X,M)$ is canonical. We will prove that the lemma holds under this choice of $N_0$. In other words, given $n\ge N_0$ and $X$ as in the statement of the lemma, we need to show that $\ell(\cO_X/\fa)\ge \binom{n-1}{m}$ for every $\fa$ co-supported at $x$ such that $(X,\fa)$ is not lc.

First notice that by the same degeneration argument as in \S \ref{sec:adjunction}, it suffices to prove this under the assumption that $X$ is the cone over a Fano complete intersection $V\subseteq\bP^{n-1+r}$ of codimension $r$ and index $s\ge m$ and the ideal $\fa$ is homogeneous. In this case, we have $X=\Spec(R)$ where
\[R=\bigoplus_{i=0}^\infty H^0(V,\cO_V(i))
\]
and $\fa\subseteq R$ is given by a graded system of linear series $\cM_\bullet=(\cM_i)$ where $\cM_i\subseteq H^0(V,\cO_V(i))$. Clearly $\ell(R/\fa)\ge h^0(V,\cO_V(i))-\dim \cM_i$ for all $i\ge 0$, hence the lemma would immediately follow once we prove the following claim:
    \begin{claim*}
    If $n\ge N_0$, then $h^0(V,\cO_V(m))-\dim \cM_m \ge \binom{n-1}{m}$.
    \end{claim*}
    
    To see this, let $Z$ be the base locus of $\cM_m$. Suppose that every component of $Z$ has codimension at least $m+1$, then since $\dim V=n-1\ge N_0-1$, the pair $(V,\cM_m)$ is canonical by the choice of $N_0$. As $V$ has Fano index at least $m$, $-(K_V+\cM_m)$ is nef, hence the cone over $(V,\cM_m)$ is lc by \cite[Lemma 3.1]{mmp}. In particular (since $\cM_m\subseteq\fa$), the pair $(X,\fa)$ is lc, contrary to our assumption on $\fa$. It follows that some irreducible component, say, $Z_0$ of $Z$ has codimension at most $m$. In other words, $\dim Z_0\ge n-1-m$.
    
    Let $\pi:V\dashrightarrow \bP^{n-1-m}$ be a general linear projection whose restriction to $Z_0$ is generically finite. Consider $\cN=f^*|\cO_{\bP^{n-1-m}}(m)|\subseteq H^0(V,\cO_V(m))$, then it is easy to see that $\dim \cN=\binom{n-1}{m}$, so the claim would follow if we have $\cM\cap\cN=\{0\}$. This last statement holds since every element of $\cM$ vanishes along $Z_0$ while by construction of $\pi$, none of the elements of $\cN$ (except zero) is identically zero along $Z_0$. We thus complete the proof of the claim and hence the lemma as well.
\end{proof}

\begin{proof}[Proof of Theorem \ref{thm:cpi-ordinary}]
Let $x\in X$ and let $Y\subseteq X$ be a general complete intersection of codimension $2r+\delta$ containing $x$, then the second assumption implies that $Y$ has smooth tangent cone of Fano index at least $2r+\delta+2$ at $x$. By Lemma \ref{lem:non-klt colength}, $\lnlc(x,Y)$ grows at least like a polynomial in $n$ of degree $2r+\delta+2$, hence for $n\gg 0$ we have $\lnlc(x,Y)> \binom{n+r+1}{2r+\delta+1}$ and therefore $X$ is birationally superrigid by Theorem \ref{thm:main criterion}. The proof of K-stability is similar.
\end{proof}

\begin{proof}[Proof of Theorem \ref{thm:hypersurface-ordinary}]
The existence of $N$ follows by taking $\delta=0$ and $r=1$ in the previous corollary. By \cite[Remark A.6]{Z-cpi}, we may take $N_0=200$ in \cite[Theorem A.2]{Z-cpi} for $m=4$, $r=1$. Hence $N\le 250$ by a careful inspection of the inequalities involved in the proof of Theorem \ref{thm:cpi-ordinary} (with $\delta=0$ and $r=1$).
\end{proof}

\section{Adjunction for local volumes and normalized colengths} \label{sec:adjunction}

In this section we prove Theorem \ref{thm:adjunction} and its stronger form Theorem \ref{thm:strongadj}, using a degeneration argument similar to \cite{lx16}.

For simplicity, we may assume $X=\Spec(R)$ is affine and $(X,D)$ is klt. In addition, we may assume that $K_X+D\sim_{\bQ} 0$ by shrinking $X$ if necessary. Let $\fm$ be the maximal ideal of $R$ whose cosupport is $x$. Let $H=(h=0)$ be a normal Cartier divisor on $X$ with $h\in\fm$. Denote $A:=R/(h)$ so that $H=\Spec\,A$. In this section, $H$ can be an irreducible component of $D$. Consider the extended Rees algebra $\cR:=\oplus_{k\in\bZ}\fa_k t^{-k}$
where $\fa_k:=\fa_k(\ord_H)=(h^{\max\{k,0\}})$.
Clearly, $\cR$ is a sub $\bC[t]$-algebra of $R[t,t^{-1}]$.
From \cite[Section 4]{lx16} we know that 
\[
 \cR\otimes_{\bC[t]}\bC[t,t^{-1}]\cong R[t,t^{-1}],
 \quad \cR\otimes_{\bC[t]}\bC[t]/(t)\cong \oplus_{k\in 
 \bZ_{\geq 0}}\fa_k/\fa_{k+1}=:T.
\]

For any $f\in R$, suppose $k=\ord_H(f)$. Then 
we define $\tilde{f}\in \cR$ to be the element $ft^{-k}\in \cR_k$.
Denote by $\bin(f):=[f]_{\fa_{k+1}}\in \fa_k/\fa_{k+1} = T_k$.
For an ideal $\fb$ of $R$, let $\fB$ be the ideal of $\cR$
generated by $\{\tilde{f}\colon f\in \fb\}$. Let
$\bin(\fb)$ be the ideal of $T$ generated by $\{\bin(f):f\in\fb\}$.
Let $\fm_T:=\bin(\fm)$, then it is clear that $\fm_T$ is a
maximal ideal of $T$.

\begin{lem}[{\cite[Lemma 4.1]{lx16}}]
 \begin{enumerate}
\item We have the identities:
\[
 \cR/\fB\otimes_{\bC[t]}\bC[t,t^{-1}]\cong (R/\fb)[t,t^{-1}], 
 \quad  \cR/\fB\otimes_{\bC[t]}\bC[t]/(t)\cong T/\bin(\fb);
\]
\item The $\bC[t]$-algebra $\cR/\fB$ is free and thus flat as a $\bC[t]$-module. In particular, we have the identity of dimensions:
\[\dim_{\bC}(R/\fb)=\dim_{\bC}(T/\bin(\fb));\]
\item If $\fb$ is $\fm$-primary, then $\bin(\fb)$ is an $\fm_T$-primary homogeneous ideal.
\end{enumerate}
\end{lem}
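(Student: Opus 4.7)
The plan is to reduce everything to an explicit description of $\fB$ in each graded piece of $\cR$. The first step is to establish the formula
\[
\fB\cap\cR_k=(\fb\cap\fa_k)\cdot t^{-k}\qquad\text{for all }k\in\bZ.
\]
The containment ``$\supseteq$'' is immediate: for $g\in\fb\cap\fa_k$ with $m=\ord_H g\leq k$, the element $\tilde g=gt^{-m}$ lies in $\fB$, and multiplying by $h^{k-m}t^{-(k-m)}\in\cR$ produces $gt^{-k}$. The reverse inclusion follows by expanding a typical generator $r\tilde f\in\fB$, writing $r=at^{-j}$ with $a\in\fa_j$, and invoking the additivity $\ord_H(af)=\ord_H a+\ord_H f$ to collect the coefficient of $t^{-k}$ into $\fb\cap\fa_k$.

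Given this, part (1) is definition-chasing. For the first identity, each $\tilde f$ differs from $f$ by the unit $t^{-\ord_H f}$ in $\bC[t,t^{-1}]$, so under $\cR[t^{-1}]\cong R[t,t^{-1}]$ the ideal $\fB\cdot\bC[t,t^{-1}]$ coincides with $\fb\cdot R[t,t^{-1}]$. For the second, the standard isomorphism $\cR/(t)\xrightarrow{\sim}T$ sends $\tilde f$ to $\bin(f)$ by construction, so it carries $\fB$ onto $\bin(\fb)$ and descends to an isomorphism of the quotients.

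For part (2), the step-1 formula yields $(\cR/\fB)_k=\bigl(\fa_k/(\fb\cap\fa_k)\bigr)\cdot t^{-k}$, and the $t$-multiplication $(\cR/\fB)_k\to(\cR/\fB)_{k-1}$ is the natural map $\fa_k/(\fb\cap\fa_k)\to\fa_{k-1}/(\fb\cap\fa_{k-1})$, which is injective because $\fa_k\subseteq\fa_{k-1}$ forces the kernel $\fa_k\cap(\fb\cap\fa_{k-1})$ to equal $\fb\cap\fa_k$. To upgrade torsion-freeness to genuine freeness, I would choose $\bC$-vector space complements $V_k\subseteq\fa_k/(\fb\cap\fa_k)$ to the image of $\fa_{k+1}/(\fb\cap\fa_{k+1})$, lift each $V_k$ into $(\cR/\fB)_k$, and observe (using the separation $\bigcap_k\fa_k\cdot(R/\fb)=0$, which in applications follows from $\fb$ being $\fm$-primary) that every element of $\cR/\fB$ has a finite, unique $\bC[t]$-expansion on these lifts. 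The dimension equality then falls out by specializing this basis at $t=1$ versus $t=0$.

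Part (3) is elementary once (1) is in hand. Homogeneity is built into the definition, since each generator $\bin(f)$ lies in pure degree $\ord_H f$. For the $\fm_T$-primary assertion, $\fm$-primarity of $\fb$ gives $\fb\subseteq\fm$, so $\bin(\fb)\subseteq\bin(\fm)=\fm_T$; and $\fm^N\subseteq\fb$ combined with the multiplicativity $\bin(f_1\cdots f_N)=\bin(f_1)\cdots\bin(f_N)$ (a consequence of the additivity of $\ord_H$) yields $\fm_T^N\subseteq\bin(\fm^N)\subseteq\bin(\fb)$. The main obstacle in the whole argument is the freeness, not merely flatness, in part (2): torsion-freeness over the PID $\bC[t]$ only guarantees flatness in general, so producing an actual free basis is where the substantive work lies, and I would verify the ``iterative expansion terminates'' step carefully by exploiting the separated nature of the filtration induced on $R/\fb$.
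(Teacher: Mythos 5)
The paper does not prove this lemma; it is quoted verbatim from \cite[Lemma 4.1]{lx16}, so there is no internal proof to compare against. Judged on its own, your strategy --- pin down $\fB$ degree by degree via $\fB\cap\cR_k=(\fb\cap\fa_k)\,t^{-k}$ and then read (1)--(3) off this formula --- is the natural direct verification, and the overall structure is sound. However, two local steps are written incorrectly and should be repaired before the argument is complete.

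First, in the ``$\supseteq$'' direction you take $m=\ord_H g\leq k$ and multiply $\tilde g=gt^{-m}$ by $h^{k-m}t^{-(k-m)}$. The inequality is backwards: for $k\geq 0$, membership $g\in\fb\cap\fa_k=(h^k)\cap\fb$ forces $m=\ord_H g\geq k$, and the correct multiplier is simply $t^{m-k}$, which lies in $\cR_{k-m}$ (legitimate since $k-m\leq 0$ gives $\fa_{k-m}=R$). As written, in the regime $k>m$ your product is $\tilde g\cdot\widetilde{h^{k-m}}=g\,h^{k-m}t^{-k}$, not $gt^{-k}$; and in the actually occurring regime $k<m$ the element $h^{k-m}t^{-(k-m)}$ is not even defined. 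Second, the identity $\bin(f_1\cdots f_N)=\bin(f_1)\cdots\bin(f_N)$ invoked for (3) is \emph{not} unconditional: it holds precisely when $\ord_H(f_1\cdots f_N)=\sum_i\ord_H(f_i)$, which can fail when $T\cong A[s]$ has zero divisors. What is always true is the dichotomy $\bin(f)\bin(g)\in\{0,\ \bin(fg)\}$, and either alternative already lies in $\bin(\fm^N)$, so the conclusion $\fm_T^N\subseteq\bin(\fm^N)\subseteq\bin(\fb)$ survives; the argument should be phrased in those terms.

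On the freeness claim in (2), your caution is correct and correctly resolved. Torsion-freeness over the PID $\bC[t]$ only yields flatness, and your basis construction from complements $V_k$ genuinely requires the induced filtration $\bar\fa_k:=\mathrm{image}(\fa_k)\subseteq R/\fb$ to be separated; without that hypothesis $\cR/\fB$ can behave like $\bC[t,t^{-1}]$, which is flat but not free. Since the dimension equality in (2) only has content when $\fb$ is $\fm$-primary (where $\fa_N\subseteq\fb$ for $N\gg0$, so separation is automatic), and that is the only case the paper uses, tying freeness to $\fm$-primarity is the right reading of the cited lemma.
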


Since $\cR$ is a $\bC[t]$-algebra that is a flat $\bC[t]$-module, it induces a flat morphism $\pi:\cX=\Spec\, \cR\to \bA^1$. We know that  $\cX\setminus\cX_0=\pi^{-1}(\bA^1\setminus\{0\})\cong X\times (\bA^1\setminus\{0\})$. Let $\cD$ be the Zariski closure of $D\times(\bA^1\setminus\{0\})$. Then $(K_{\cX}+\cD)|_{\cX\setminus\cX_0}$ corresponds to the $\bQ$-Cartier $\bQ$-divisor $(K_X+D)\times(\bA^1\setminus\{0\})$ on $X\times(\bA^1\setminus\{0\})$. Since $K_X+D\sim_{\bQ}0$, we know that $K_{\cX}+\cD|_{\cX\setminus\cX_0}\sim_{\bQ}0$. Hence
$K_{\cX}+\cD$ is $\bQ$-linearly equivalent to a multiple of $\cX_0$. Since $\cX_0$ is Cartier on $\cX$, we have that $K_{\cX}+\cD$ is $\bQ$-Cartier on $\cX$. Hence $\pi:(\cX,\cD)\to\bA^1$ is a $\bQ$-Gorenstein flat family. 

Next we analyze the divisor $\cD_0$. Denote $D=cH+\sum_{i=1}^{l}c_i D_i$ where $c_i>0$ for each $1\leq i\leq l$.  
Let $\fp_i$ be the height $1$ prime ideal in $R$ corresponding to the generic point of $D_i$. Let $\cD_i$ be the Zariski closure of $D_i\times(\bA^1\setminus\{0\})$ in $\cX$. Then we know that the generic point $\cD_i$ corresponds to a height $1$ prime ideal $\fq_i$ of $\cR$, where
\[
\fq_i=\fp_i[t,t^{-1}]\cap\cR=\oplus_{k\in\bZ}(\fp_i\cap\fa_k) t^{-k}.
\]
Hence $\cD_{i,0}$ is the same as the divisorial part of the ideal sheaf $\fq_i|_{\cX_0}=\oplus_{k\in\bZ_{\geq 0}}
(\fa_k\cap\fp_i+\fa_{k+1})/\fa_{k+1}$ on $\cX_0=\Spec\,T$. Since $\fa_k=(h^{k})$ for $k\geq 0$ and $h\not\in\fp_i$, we know that $T\cong A[s]$ with $A=R/(h)$, and $\fa_k\cap\fp_i=(h^k)\fp_i$.
Under this isomorphism, $\fq_i|_{\cX_0}$ corresponds to 
$(\fp_i+(h)/(h))[s]$. Thus $\cD_{i,0}$ corresponds to $D_i|_{H}\times \bA^1$ under the isomorphism $\cX_0\cong H\times\bA^1$. Similarly, let $\cH$ be the Zariski closure of $H\times(\bA^1\setminus\{0\})$ in $\cX$. Let $\fq$ be the height $1$ prime ideal of $\cR$ corresponding to the generic point of $\cH$. We have 
\[
\fq=(h)[t,t^{-1}]\cap\cR=\oplus_{k\in\bZ} ((h)\cap\fa_k)t^{-k}.
\]
Hence $\cH_0$ is the same as the divisorial part of the ideal sheaf $\fq|_{\cX_0}=\oplus_{k\in\bZ_{\geq 0}}(\fa_k\cap(h)+\fa_{k+1})/\fa_{k+1}$ on $\cX_0$. Under the isomorphism $T\cong A[s]$, it is easy to see that $\fq|_{\cX_0}$ corresponds to the principal ideal $(s)$. Thus $\cH_0$ corresponds to $H\times\{0\}$ under the isomorphism $\cX_0\cong H\times\bA^1$. To summarize, we have shown the following proposition.

\begin{prop}\label{prop:centralfiber}
With the above notation, the family $\pi:(\cX,\cD)\to\bA^1$ is $\bQ$-Gorenstein and flat. Moreover, if $D=cH+\sum_{i=1}^l c_i D_i$ then
\[
(\cX_0,\cD_0)\cong (H\times\bA^1,c(H\times\{0\})+\sum_{i=1}^l c_i (D_i|_{H}\times\bA^1)).
\]
\end{prop}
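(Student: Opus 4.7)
The plan is to assemble the three claims from pieces that have mostly been laid out above; since the hard work already lies in the extended Rees algebra construction and the explicit computation of the prime ideals $\fq$ and $\fq_i$, the proof of the proposition itself is essentially bookkeeping. Flatness of $\pi$ is immediate from part (2) of the cited Lemma applied to $\fb=(0)$, which says that $\cR$ is free, hence flat, as a $\bC[t]$-module.

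For the $\bQ$-Gorenstein property, I would formalize the reasoning given just above the statement. Outside the central fiber we have $\cX \setminus \cX_0 \cong X \times (\bA^1 \setminus \{0\})$ and $\cD|_{\cX \setminus \cX_0}$ corresponds to $D \times (\bA^1 \setminus \{0\})$, so $(K_\cX + \cD)|_{\cX \setminus \cX_0} \sim_\bQ 0$ from the reduction $K_X + D \sim_\bQ 0$. Consequently $K_\cX + \cD$ is $\bQ$-linearly equivalent to a divisor supported on $\cX_0$. Since $\cX_0 = \Spec\,T$ and $T \cong A[s]$ is a domain (as $H$ is normal), $\cX_0$ is irreducible; it is moreover the principal Cartier divisor cut out by $t$, so any divisor supported on $\cX_0$ is a rational multiple of the Cartier divisor $\cX_0$. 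Hence $K_\cX + \cD$ is $\bQ$-Cartier, as desired.

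For the identification of $(\cX_0,\cD_0)$, I would decompose $\cD = c\cH + \sum_{i=1}^l c_i \cD_i$: each of $\cH$ and $\cD_i$ is the Zariski closure of the corresponding prime component of $D \times (\bA^1\setminus\{0\})$, these closures are flat over $\bA^1$, and the boundary coefficients transfer. The text has already computed, via $\fq|_{\cX_0}$, $\fq_i|_{\cX_0}$, and the identification $T \cong A[s]$, that $\cH_0$ corresponds to $H \times \{0\}$ and $\cD_{i,0}$ corresponds to $D_i|_H \times \bA^1$ under $\cX_0 \cong H \times \bA^1$. Substituting into $\cD_0 = c\cH_0 + \sum c_i \cD_{i,0}$ yields the claimed expression.

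The main potential subtlety is verifying that no extra components appear in the central fibers of $\cH$ or the $\cD_i$; equivalently, that the prime ideals $\fq,\fq_i$ restrict to ideals on $\cX_0$ whose divisorial parts match the claimed components. This is exactly the content of the Rees-algebra computations already performed in the excerpt, so no additional work is required beyond pointing to them.
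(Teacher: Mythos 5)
Your proposal is correct and follows essentially the same route as the paper, which proves this proposition by the preparatory computations in the paragraphs preceding the statement and then summarizes them; the one place you go slightly beyond the paper's phrasing is the explicit observation that $\cX_0 \cong \Spec A[s]$ is irreducible (because $H$ is normal, hence $A$ a domain), which is the reason a $\bQ$-divisor supported on $\cX_0$ is a rational multiple of the Cartier divisor $\cX_0$ — a useful detail that the paper leaves implicit.
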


Let $x_0\in\cX_0$ be the closed point corresponding to
$\fm_T$. Then it is clear that $\pi$ provides a special degeneration of $(x\in (X,D))$ to $(x_0\in (\cX_0,\cD_0))$. By Proposition \ref{prop:centralfiber},
there is a natural $\bG_m$-action on $(x_0\in(\cX_0,\cD_0))$ induced by the standard $\bG_m$-action on $\bA^1$. We define the \emph{$\bG_m$-invariant local volume/normalized colength} of $(x_0\in (\cX_0,\cD_0))$ as
\begin{align*}
\hvol^{\bG_m}(x_0,\cX_0,\cD_0)&:=\inf\{\lct(\cX_0,\cD_0;\fa)^n\cdot\e(\fa)\mid \fa\textrm{ is $\fm_{T}$-primary and $\bG_m$-invariant}\};\\
\hcol^{\bG_m}(x_0,\cX_0,\cD_0)&:=\inf\{\lct(\cX_0,\cD_0;\fa)^n\cdot\ell(T/\fa)\mid \fa\textrm{ is $\fm_{T}$-primary and $\bG_m$-invariant}\}.
\end{align*}
By \cite[Section 4]{lx16}, our definition of  $\bG_m$-invariant local volume is the same as the infimum of normalized volumes of all $\bG_m$-invariant valuations.

The following lemma is similar to \cite[Lemma 4.3 and 4.4]{lx16}.
\begin{lem} \label{lem:degeneration}
With the above notation, let $\fb$ be an $\fm$-primary ideal of $R$. Then 
 \[
  \lct(X,D;\fb)\geq \lct(\cX_0,\cD_0,\bin(\fb)),\quad
  \ell(R/\fb)=\ell(T/\bin(\fb)).
 \]
 Moreover,
 \begin{align*}
\hcol(x,X,D)&\geq \hcol^{\bG_m}(x_0,\cX_0,\cD_0)=\hcol(x_0,\cX_0,\cD_0),\\
\hvol(x,X,D)&\geq \hvol^{\bG_m}(x_0,\cX_0,\cD_0)=\hvol(x_0,\cX_0,\cD_0).
 \end{align*}

\end{lem}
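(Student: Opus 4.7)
\emph{Stage 1 (length equality and lct inequality).} My plan is to work in three stages. The identity $\ell(R/\fb)=\ell(T/\bin(\fb))$ is a direct restatement of part (2) of the preceding lemma, so no new work is needed there. For the lct inequality I would view $\fB\subseteq\cR$ as a globally defined ideal sheaf on $\cX$, which by construction restricts to $\fb$ on each fiber $\cX_t\cong X$ with $t\ne 0$ and to $\bin(\fb)$ on $\cX_0$. Proposition \ref{prop:centralfiber} tells us that $\pi\colon(\cX,\cD)\to\bA^1$ is $\bQ$-Gorenstein and flat, and klt is an open property in such families; the standard lower semicontinuity of log canonical thresholds in $\bQ$-Gorenstein flat families then yields
\[
\lct(\cX_0,\cD_0;\bin(\fb))\le \lct(X,D;\fb).
\]

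\emph{Stage 2 (estimates for $\hcol$ and $\hvol$).} With Stage 1 in hand, the estimate for normalized colength is essentially automatic: for every $\fm$-primary $\fb\subseteq R$, the initial ideal $\bin(\fb)$ is $\fm_T$-primary and $\bG_m$-invariant, and
\[
\lct(X,D;\fb)^n\ell(R/\fb)\ge \lct(\cX_0,\cD_0;\bin(\fb))^n\ell(T/\bin(\fb))\ge \hcol^{\bG_m}(x_0,\cX_0,\cD_0),
\]
whence taking infimum over $\fb$ gives the claim. The normalized volume case is the main obstacle, since multiplicity does not pass cleanly through $\bin$: one has only $\bin(\fb^k)\supseteq\bin(\fb)^k$, so a priori $\e(\bin(\fb))$ may exceed $\e(\fb)$. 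To bypass this I would apply Stage 1 to the graded sequence of powers rather than to $\fb$ alone. The ideals $\fa_k:=\bin(\fb^k)$ form a $\bG_m$-invariant graded system of $\fm_T$-primary ideals whose colengths satisfy $\ell(T/\fa_k)=\ell(R/\fb^k)$, and consequently their asymptotic multiplicity recovers $\e(\fb)$. Interpreting $\hvol^{\bG_m}$ via graded systems of ideals and passing to the limit in the inequality from Stage 1 then yields $\lct(X,D;\fb)^n\e(\fb)\ge\hvol^{\bG_m}(x_0,\cX_0,\cD_0)$, and a final infimum over $\fb$ proves the $\hvol$ inequality.

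\emph{Stage 3 (removing the $\bG_m$-invariance).} To identify $\hcol^{\bG_m}$ with $\hcol$ and $\hvol^{\bG_m}$ with $\hvol$ on $(\cX_0,\cD_0)$, I would exploit the natural $\bG_m$-action on $\cX_0\cong H\times\bA^1$ that preserves the pair $(\cX_0,\cD_0)$. For an arbitrary $\fm_T$-primary ideal $\fa\subseteq T$, taking the initial ideal with respect to the $\bG_m$-grading is a Rees-type degeneration of the same shape as in Stage 1; it produces a $\bG_m$-invariant ideal with the same colength and multiplicity, while Stage 1 applied to this second degeneration shows that the lct can only decrease. Therefore the infima over $\bG_m$-invariant ideals coincide with those over all $\fm_T$-primary ideals, completing the proof.
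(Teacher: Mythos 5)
Your proposal follows the paper's argument closely: Stage 1 is the length identity plus Ambro's lower semicontinuity of lct, Stage 2 handles $\hcol$ directly and $\hvol$ via graded sequences (exactly the paper's appeal to \cite[Lemma~4.3]{lx16}), and Stage 3 re-runs the initial-ideal degeneration inside $T=A[s]$. One imprecision in Stage 3: taking the initial ideal with respect to the $\bG_m$-grading preserves colength but not multiplicity (one only has $\rme(\bin'(\fa))\geq\rme(\fa)$ in general, by the same $\bin'(\fa^k)\supseteq\bin'(\fa)^k$ issue you flagged in Stage 2), so the $\hvol^{\bG_m}=\hvol$ equality also requires the graded-sequence formulation rather than the single-ideal ``same multiplicity'' claim as written; once that is patched, the argument is correct and coincides with the paper's.
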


\begin{proof}
The inequalities on log canonical thresholds follows from the lower semi-continuity of $\lct$ \cite[Corollary 1.10]{ambro-lct}.
Thus for any $\fm$-primary ideal $\fb$, we have
\[
\lct(X,D;\fb)^n\cdot\ell(R/\fb)\geq \lct(\cX_0,\cD_0;\bin(\fb))^n\cdot\ell(T/\bin(\fb)).
\]
Thus we have $\hcol(x,X,D)\geq\hcol^{\bG_m}(x_0,\cX_0,\cD_0)$. Apply this initial degeneration argument to ideals in $T$, we get $\hcol(x_0,\cX_0,\cD_0)\geq \hcol^{\bG_m}(x_0,\cX_0,\cD_0)$. By definition we have $\hcol(x_0,\cX_0,\cD_0)\leq \hcol^{\bG_m}(x_0,\cX_0,\cD_0)$, hence the inequality on normalized colengths is proved. The last inequality on local volumes is proved in the same way as \cite[Lemma 4.3]{lx16} using graded sequence of ideals.
\end{proof}

Next we compare the local volume and normalized colength
of $x_0\in (\cX_0,\cD_0)$ to $x\in(H,(D-cH)|_H)$.

\begin{lem} \label{lem:product}
 With the above notation, we have
 \begin{align*}
\hcol(x_0,\cX_0,\cD_0)&\geq 
  \left(\frac{n}{n-1}\right)^{n-1}(1-c)\hcol(x,H,(D-cH)|_H),\\
  \hvol(x_0,\cX_0,\cD_0)& =
  \frac{n^n}{(n-1)^{n-1}}(1-c)\hvol(x,H,(D-cH)|_H).
 \end{align*}
\end{lem}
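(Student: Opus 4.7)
The strategy is to exploit the $\bG_m$-action on $(\cX_0,\cD_0)\cong (H\times\bA^1, cH_0+D'\times\bA^1)$ (with $D':=(D-cH)|_H$) coming from the standard scaling on the $\bA^1$-factor. By Lemma \ref{lem:degeneration}, both invariants at $x_0$ reduce to infima over $\bG_m$-invariant data. A standard Vandermonde argument shows that every $\bG_m$-invariant valuation $v$ on $A[s]=\cO_{\cX_0}$ centered at $x_0$ (where $A=\cO_{H,x}$) has the quasi-monomial form $v=v_H+\lambda\,\ord_s$ with $\lambda:=v(s)>0$ and $v_H:=v|_A$ a valuation on $A$ centered at $x$. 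For such $v$, direct computation yields
\[
A_{\cX_0,\cD_0}(v)=A_{H,D'}(v_H)+\lambda(1-c), \qquad \vol(v)=\vol(v_H)/\lambda,
\]
the first by tracking discrepancies along $H_0$ (coefficient $c$) and the second via the decomposition $\fa_m(v)=\bigoplus_k \fa_{\lceil m-k\lambda\rceil}(v_H)\,s^k$ together with a Riemann-sum asymptotic for $\ell(A[s]/\fa_m(v))$.

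For the volume equality, I minimize $A(v)^n\vol(v)=(A_{H,D'}(v_H)+\lambda(1-c))^n\vol(v_H)/\lambda$ over $\lambda>0$ for each fixed $v_H$. The optimizer is $\lambda(1-c)=A_{H,D'}(v_H)/(n-1)$, giving minimum $\tfrac{n^n}{(n-1)^{n-1}}(1-c)A_{H,D'}(v_H)^{n-1}\vol(v_H)$; taking infimum over $v_H$ centered at $x$ then yields the desired identity.

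For the colength inequality, let $\fa=\bigoplus_k J_k s^k$ be any $\bG_m$-invariant $\fm_T$-primary ideal (with increasing chain $J_0\subseteq J_1\subseteq\cdots$ stabilizing at $A$ for $k\ge N$), and pick a $\bG_m$-invariant $v^*=v_H^*+\lambda^*\ord_s$ approximately computing $\lct(\cX_0,\cD_0;\fa)$. Setting $A_H:=A_{H,D'}(v_H^*)$, the identity $v^*(\fa)=\min_k(v_H^*(J_k)+k\lambda^*)\le N\lambda^*$ (attained at $k=N$ since $v_H^*(J_N)=0$) forces $v_H^*(J_k)\ge(v^*(\fa)-k\lambda^*)_+$. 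Applying the definition of $\hcol(x,H,D')$ to each $J_k\subsetneq A$, together with the standard inequality $\lct(H,D';J_k)\le A_H/v_H^*(J_k)$, I obtain
\[
\ell(A/J_k)\ge \hcol(x,H,D')\cdot(v^*(\fa)-k\lambda^*)_+^{n-1}/A_H^{n-1}.
\]
Summing $\ell(A[s]/\fa)=\sum_k\ell(A/J_k)$ and applying the elementary Riemann-sum bound $\sum_{k\ge 0}(v^*(\fa)-k\lambda^*)_+^{n-1}\ge v^*(\fa)^n/(n\lambda^*)$ (valid since the integrand is nonnegative and decreasing), the factor $v^*(\fa)^n$ cancels against that in $\lct(\fa)^n=(A_H+\lambda^*(1-c))^n/v^*(\fa)^n$, leaving
\[
\lct(\fa)^n\ell(A[s]/\fa)\ge \hcol(x,H,D')\cdot\frac{(A_H+\lambda^*(1-c))^n}{n\lambda^*A_H^{n-1}}.
\]
Optimizing the right-hand side over $\lambda^*>0$ (minimum at $\lambda^*(1-c)=A_H/(n-1)$) yields the claimed lower bound $\bigl(\tfrac{n}{n-1}\bigr)^{n-1}(1-c)\hcol(x,H,D')$.

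The principal technical obstacle is the Riemann-sum step in the colength argument: one must verify that all the positive contributions to $\sum_k(v^*(\fa)-k\lambda^*)_+^{n-1}$ come from layers $k\le N-1$ where $J_k\subsetneq A$ (so the $\hcol$ estimate applies). This is ensured precisely by the bound $v^*(\fa)\le N\lambda^*$, which makes $(v^*(\fa)-k\lambda^*)_+=0$ for $k\ge N$; the sum then automatically runs over the nontrivial layers only. By contrast the proof of the volume identity is cleaner because it uses asymptotic information from graded sequences $\fa_\bullet(v)$, bypassing any discretization. A minor auxiliary point is the existence of a $\bG_m$-invariant valuation approximating $\lct(\fa)$, which follows either by averaging over the $\bG_m$-orbit of an lc place or by $\bG_m$-equivariant log resolution.
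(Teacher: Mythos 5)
Your proof is correct and follows essentially the same route as the paper's: parameterize $\bG_m$-invariant valuations on $H\times\bA^1$ as $\tilde v_\xi$ with $\xi=\lambda=v(s)$, compute $A_{\cX_0,\cD_0}(\tilde v_\xi)=A_{H,D'}(v)+(1-c)\xi$ and $\vol(\tilde v_\xi)=\vol(v)/\xi$, decompose an invariant ideal by $s$-degree, apply $\hcol(x,H,D')$ layer by layer together with a Riemann-sum bound $\sum_i(k-\xi i)_+^{n-1}\ge k^n/(n\xi)$, and finish with AM--GM (which the paper phrases as an inequality and you phrase as optimizing over $\lambda^*$; both are the same estimate). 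The only cosmetic differences are notational ($\tilde v_\xi$ versus $v_H+\lambda\,\ord_s$) and that the paper cites \cite[Section 11]{aipsv}, \cite[Theorem 2.15]{lx17} for the classification of $\bG_m$-invariant valuations and \cite[Lemma 26]{Liu18} for the centeredness at $(x,0)$, where you invoke a Vandermonde/averaging argument instead.
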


\begin{proof}
If $(\cX_0,\cD_0)$ is not klt at $x_0$, then $(H,(D-cH)|_H)$ is not klt at $x$ by inversion of adjunction, in which case the lemma is trivial. Thus we may assume that $(\cX_0,\cD_0)$ is klt at $x_0$, and $(H,(D-cH)|_H)$ is klt at $x$.
By Proposition \ref{prop:centralfiber}, we have
\[
(x_0\in (\cX_0,\cD_0))\cong ((x,0)\in (H\times\bA^1,c(H\times\{0\})+(D-cH)|_H\times\bA^1))
\]
We need to use a lemma on $\bG_m$-equivariant valuations.
Assume $H=\Spec\,A$ is affine. Let $s$ be the parameter of
$\bA^1$ in $H\times\bA^1$.

\begin{lem}[{\cite[Section 11]{aipsv} and \cite[Theorem 2.15]{lx17}}]
 For any valuation $v$ on $\bC(H)$
 and any $\xi\in\bR$,  we can define a valuation
 $\tilde{v}_\xi$ on $\bC(H\times\bA^1)$ as follows:
 \[
 \tilde{v}_{\xi}(f):=\min_{0\leq i\leq m,~f_i\neq 0}\{v(f_i)+\xi\cdot i\}\quad\textrm{for 
 any }f=\sum_{i=0}^m f_i s^i\in A[s].
 \]
 Conversely, any $\bG_m$-invariant valuation on $\bC(H\times\bA^1)$
 is of the form $\tilde{v}_\xi$ for some $v\in\Val_H$ and $\xi\in\bR$.
 Moreover, $\tilde{v}_{\xi}$ is centered at $(x,0)$ if and 
 only if $v$ is centered at $x$ and $\xi>0$.
\end{lem}

Now assume $\fb$ is an ideal sheaf on $H\times\bA^1$ cosupported
at $(x,0)$, then we may write $\fb$ as
\[
 \fb=\fb_0+\fb_1 s+\cdots+\fb_m s^m +(s^{m+1})\subset A[s].
\]
For simplicity, we will not distinguish 
$(\cX_0,\cD_0)$ from $(H\times\bA^1,c(H\times\{0\})+(D-cH)|_H\times\bA^1)$.
Assume $\lct(\fb):=\lct(\cX_0,\cD_0,\fb)$ is computed by a $\bG_m$-invariant divisorial
valuation $\tilde{v}_\xi$ (for some valuation $v$ on $\bC(H)$ and some $\xi\in\bR$). By \cite[Lemma 26]{Liu18}, $\tilde{v}_\xi$ is centered at $(x,0)$.
Denote $k:=\tilde{v}_{\xi}(\fb)$
and $a:=A_{(H,(D-cH)|_H)}(v)$, then $A_{(H,(D-cH)|_H)\times\bA^1}(\tilde{v}_\xi)=a+\xi$ (by \cite[\S 5]{JM-valuation}, it suffices to prove this when $v$ is quasi-monomial, in which case $\tilde{v}_\xi$ is also quasi-monomial and the result is clear). 
Since $\cD_0=cH\times\{0\}+(D-cH)|_H\times\bA^1$, we have 
\[
\lct(\fb)=\frac{A_{(\cX_0,\cD_0)}(\tilde{v}_\xi)}{\tilde{v}_\xi(\fb)}=\frac{A_{(H,(D-cH)|_H)\times\bA^1}(\tilde{v}_\xi)-\tilde{v}_\xi(cH\times\{0\})}{\tilde{v}_\xi(\fb)}
=\frac{a+(1-c)\xi}{k}.
\]
From the definition of $\tilde{v}_\xi$ we see that
$v(\fb_i)+\xi\cdot i\geq \tilde{v}_\xi(\fb)=k$.
Hence $$\lct(\fb_i):=\lct(H,(D-cH)|_H;\fb_i)\leq \frac{a}{v(\fb_i)}\leq
\frac{a}{k-\xi\cdot i} \textrm{ for any }i<\frac{k}{\xi}.$$
We know that $\lct(\fb_i)^{n-1}\cdot\ell(A/\fb_i)\geq \hcol(x,H,(D-cH)|_H)$.
This implies for any $i<\frac{k}{\xi}$ we have
\[
 \ell(A/\fb_i)\geq \frac{\hcol(x,H,(D-cH)|_H)}{\lct(\fb_i)^{n-1}}
 \geq \frac{\hcol(x,H,(D-cH)|_H)}{a^{n-1}}(k-\xi\cdot i)^{n-1}.
\]
Since $s^{m+1}\in\fb$, we have that 
$k=\tilde{v}_{\xi}(\fb)\leq \tilde{v}_{\xi}(s^{m+1})=\xi(m+1)$, i.e.
$\lceil k/\xi\rceil-1\leq m$.
Thus
\begin{align*}
 \ell(A[s]/\fb)&\geq\sum_{i=0}^{\lceil k/\xi\rceil-1} \ell(A/\fb_i)
 \geq \frac{\hcol(x,H,(D-cH)|_H)}{a^{n-1}}\sum_{i=0}^{\lceil k/\xi\rceil-1}(k-\xi\cdot i)^{n-1}\\
 & \geq \frac{\hcol(x,H,(D-cH)|_H)}{a^{n-1}}\int_{0}^{k/\xi}(k-\xi\cdot w)^{n-1}dw
 \\& =\frac{k^n}{na^{n-1}\xi}\hcol(x,H,(D-cH)|_H).
\end{align*}
Therefore,
\begin{align*}
 \lct(\fb)^n\cdot\ell(A[s]/\fb)&\geq \left(\frac{a+(1-c)\xi}{k}\right)^n
 \frac{k^n}{na^{n-1}\xi}\hcol(x,H,(D-cH)|_H)\\&=\frac{(a+(1-c)\xi)^n}{na^{n-1}\xi}\hcol(x,H,(D-cH)|_H)\\
 &\geq\left(\frac{n}{n-1}\right)^{n-1}(1-c)\hcol(x,H,(D-cH)|_H)
\end{align*}
where the last inequality follows from the AM-GM inequality. This proves the inequality on normalized colengths.

For the equality on local volumes,
note that (see e.g. \cite{Li-normvol} for the definition of log discrepancy, volume and normalized volume of a valuation)
\[
 A_{(\cX_0,\cD_0)}(\tilde{v}_{\xi})=a+(1-c)\xi, 
 \quad \vol(\tilde{v}_{\xi})=\frac{\vol(v)}{\xi}.
\]
Hence again by the AM-GM inequality,
\[
 \hvol(\tilde{v}_{\xi})=\frac{(a+(1-c)\xi)^n}{\xi}\vol(v)
 \geq \frac{n^n(1-c)}{(n-1)^{n-1}}a^{n-1}\vol(v)=\frac{n^n(1-c)}{(n-1)^{n-1}}\hvol(v).
\]
On the other hand, we have 
\[
\hvol(\tilde{v}_{\frac{a}{(1-c)(n-1)}})=
\frac{n^n(1-c)}{(n-1)^{n-1}}\hvol(v).
\]
Thus the equality is also proved.
\end{proof}

Combining Lemma \ref{lem:degeneration} and \ref{lem:product}, we have the following theorem which yields Theorem \ref{thm:adjunction} when $c=0$.

\begin{thm}\label{thm:strongadj}
Let $x\in(X,D)$ be a klt singularity of dimension $n$. Let $H$ be a normal reduced Cartier divisor of $X$ containing $x$. Let $c$ be the coefficient of $H$ in $D$. Then we have
\[
  \frac{\hvol(x,X,D)}{n^n}\geq (1-c)\frac{\hvol(x,H,(D-cH)|_H)}{(n-1)^{n-1}},
  \quad \frac{\hcol(x,X,D)}{n^n/n!}\geq (1-c)\frac{\hcol(x,H,(D-cH)|_H)}{(n-1)^{n-1}/(n-1)!}.
\]
\end{thm}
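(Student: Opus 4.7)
The plan is to combine the two technical results just proved — Lemma \ref{lem:degeneration} and Lemma \ref{lem:product} — which together essentially yield the statement; all of the real work has already been done.

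First, I would invoke the reductions laid out at the start of \S \ref{sec:adjunction} to assume $X=\Spec(R)$ is affine, $(X,D)$ is klt at $x$, and $K_X+D\sim_{\bQ}0$, so that the extended Rees algebra construction produces the $\bQ$-Gorenstein flat degeneration $\pi\colon(\cX,\cD)\to\bA^1$ whose central fibre is
\[
(x_0\in(\cX_0,\cD_0))\cong\bigl((x,0)\in(H\times\bA^1,\,c(H\times\{0\})+(D-cH)|_H\times\bA^1)\bigr)
\]
as in Proposition \ref{prop:centralfiber}. Since $(X,D)$ is klt at $x$, the coefficient $c$ of $H$ in $D$ satisfies $c<1$, so $1-c>0$; if on the other hand $(\cX_0,\cD_0)$ fails to be klt at $x_0$, then by inversion of adjunction $(H,(D-cH)|_H)$ also fails to be klt at $x$, so both sides vanish and the statement is trivial.

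Second, I would apply Lemma \ref{lem:degeneration} to pass from $(x,X,D)$ to the degenerated pair $(x_0,\cX_0,\cD_0)$, yielding
\[
\hvol(x,X,D)\geq\hvol(x_0,\cX_0,\cD_0),\qquad \hcol(x,X,D)\geq\hcol(x_0,\cX_0,\cD_0).
\]
Third, I would apply Lemma \ref{lem:product} to rewrite the right-hand sides in terms of $(x,H,(D-cH)|_H)$, producing the factors $\frac{n^n}{(n-1)^{n-1}}(1-c)$ for the volume and $\bigl(\frac{n}{n-1}\bigr)^{n-1}(1-c)$ for the colength.

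Finally, direct rearrangement gives both stated inequalities. For the volume, dividing the chained inequality by $n^n$ is immediate. For the colength, one uses the elementary identity
\[
\left(\frac{n}{n-1}\right)^{n-1}=\frac{n^n/n!}{(n-1)^{n-1}/(n-1)!}
\]
to rewrite the combined factor in the normalized form displayed in the theorem. Theorem \ref{thm:adjunction} then follows as the special case $c=0$. There is no substantive obstacle: the main thing to verify is that the $(1-c)$ factors and powers of $n$ line up between the two lemmas, which is routine arithmetic.
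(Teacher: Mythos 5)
Your proposal is correct and follows the paper's argument exactly: the paper's own proof is precisely the one-line observation that Theorem \ref{thm:strongadj} follows by chaining Lemma \ref{lem:degeneration} (degeneration to the central fibre) with Lemma \ref{lem:product} (the explicit comparison on $H\times\bA^1$), and your arithmetic identity $(n/(n-1))^{n-1}=\frac{n^n/n!}{(n-1)^{n-1}/(n-1)!}$ is the correct rewriting. The extra remark about the non-klt case is fine but is already absorbed into the first line of the proof of Lemma \ref{lem:product}, so it is not a separate step.
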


The equality of local volumes in Lemma \ref{lem:product} suggests a conjectural product formula for local volumes as follows.

\begin{conj}\label{conj:product}
Let $(x_i\in (X_i,D_i))$ be klt singularities for $i=1,2$. Denote $n_i:=\dim X_i$, then 
\[
\frac{\hvol((x_1,x_2),X_1\times X_2,\pi_1^* D_1+\pi_2^* D_2)}{(n_1+n_2)^{n_1+n_2}}=\frac{\hvol(x_1,X_1,D_1)}{n_1^{n_1}}\cdot\frac{\hvol(x_2,X_2,D_2)}{n_2^{n_2}}.
\]
\end{conj}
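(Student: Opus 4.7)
The plan is to prove the two inequalities of Conjecture \ref{conj:product} separately. The ``$\leq$'' direction is the more accessible one and follows from an explicit construction of product valuations combined with AM-GM, while the ``$\geq$'' direction is substantially more subtle and appears to require a reduction, by degeneration, to the case of a product of K-semistable Fano cone singularities in the spirit of \cite{lx16}.

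For the upper bound, given any valuations $v_i\in \Val_{X_i,x_i}$ and any positive weights $s,t>0$, I would define the \emph{tropical product} valuation $v_{s,t}$ on $K(X_1\times X_2)$ by setting
\[
v_{s,t}(f) := \sup\Bigl\{k\in\bR \;:\; f\in \sum_{s i+t j\ge k} \fa_i(v_1)\cdot \fa_j(v_2)\Bigr\},
\]
where $\fa_{\bullet}(v_i)$ denote the valuation ideals and the right hand side is interpreted inside the completed local ring at $(x_1,x_2)$. A computation analogous to the proof of Lemma \ref{lem:product} (based on $\bG_m^2$-equivariance of the construction) yields
\[
A_{(X_1\times X_2,\,\pi_1^*D_1+\pi_2^*D_2)}(v_{s,t}) = s\,A_{(X_1,D_1)}(v_1) + t\,A_{(X_2,D_2)}(v_2),
\]
and the identification $\mathrm{gr}_{v_{s,t}}\cO \cong \mathrm{gr}_{v_1}\cO_{X_1}\otimes \mathrm{gr}_{v_2}\cO_{X_2}$ (as bigraded rings, the single grading on the left being recovered from the two gradings on the right via weights $s,t$) gives $\vol(v_{s,t}) = \vol(v_1)\vol(v_2)/(s^{n_1} t^{n_2})$. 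The scale-invariant function $(s,t)\mapsto \hvol(v_{s,t})$ is then minimized at the critical point $s=n_1/A_1(v_1)$, $t=n_2/A_2(v_2)$, and plugging in yields
\[
\inf_{s,t>0}\hvol(v_{s,t}) \;=\; \frac{(n_1+n_2)^{n_1+n_2}}{n_1^{n_1}\,n_2^{n_2}}\,\hvol(v_1)\,\hvol(v_2).
\]
Taking the infimum over $v_1,v_2$ and using the equivalence of the ideal and valuation formulations of $\hvol$ from \cite{Liu18} gives the ``$\leq$'' direction.

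For the ``$\geq$'' direction, the strategy is to reduce, by degeneration, to the case where both factors are K-semistable Fano cone singularities. Assuming that each $\hvol(x_i,X_i,D_i)$ is computed by some quasi-monomial valuation $v_i^{\star}$, one passes to the associated graded ring as in \cite[\S 4]{lx16} to obtain a $\bG_m$-equivariant special degeneration of $(x_i,X_i,D_i)$ to a (conjecturally) K-semistable Fano cone $(o_i,Y_i,\Gamma_i)$ with the same local volume. Taking products yields a $\bG_m^2$-equivariant degeneration of $(X_1\times X_2,\pi_1^*D_1+\pi_2^*D_2)$ at $(x_1,x_2)$ to $(Y_1\times Y_2,\pi_1^*\Gamma_1+\pi_2^*\Gamma_2)$ at $(o_1,o_2)$, and the lower semicontinuity of local volumes from \cite{bl-semicont} (combined with a degeneration estimate analogous to Lemma \ref{lem:degeneration}) reduces Conjecture \ref{conj:product} to this cone case. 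In the latter setting the infimum of the normalized volume is achieved among $\bG_m^2$-invariant valuations, which form a two-dimensional Reeb cone, so the claim can be verified by an explicit computation using the cone formula of \cite{LL-vol-min} together with the same AM-GM optimization as in the upper bound step.

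The main obstacle is the lower bound, and more specifically the invocation of the stable degeneration conjecture in the generality needed to ensure that the associated graded degeneration produces a K-semistable Fano cone (which is known in several important cases but remains open in general). A conceivable workaround would be to bypass the K-semistability input entirely and prove the lower bound directly by comparing an arbitrary $\fm_{(x_1,x_2)}$-primary ideal on the product to its tensor-product approximations; however, extracting the sharp constant $(n_1+n_2)^{n_1+n_2}/(n_1^{n_1}n_2^{n_2})$ by such means appears to reduce back to essentially the same AM-GM optimization, and controlling the log canonical threshold of a non-product ideal in terms of its tensor-product approximations is the key combinatorial difficulty.
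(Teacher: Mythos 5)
The statement you are attempting is explicitly labelled a \emph{conjecture} in the paper; the authors do not claim a proof, and you yourself correctly concede that your argument is incomplete. So the meaningful comparison here is between your partial analysis and the paper's surrounding discussion (the remark following Conjecture~\ref{conj:product}), rather than against a nonexistent proof.

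Your upper-bound sketch is sound in outline and is consistent with the paper's remark~(c), which posits exactly the kind of minimizer $v$ that your tropical product valuation $v_{s,t}$ (at the optimal weights $s=n_1/A_1(v_1)$, $t=n_2/A_2(v_2)$) produces. The identities $A(v_{s,t})=sA_1(v_1)+tA_2(v_2)$ and $\vol(v_{s,t})=\vol(v_1)\vol(v_2)/(s^{n_1}t^{n_2})$ do hold for quasi-monomial valuations, and combined with AM-GM and the valuation characterization of $\hvol$ from \cite{Liu18} this gives the ``$\leq$'' direction. One should be a little careful that the infimum over quasi-monomial valuations still computes $\hvol$ --- this is known (e.g.\ via \cite{Li-normvol} and its sequels) --- but your outline is essentially correct for this half.

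Your diagnosis of the ``$\geq$'' direction as the genuine obstruction is also accurate. The paper's remark~(a) confirms the conjecture precisely in the situations you indicate --- singularities on Gromov--Hausdorff limits of K\"ahler--Einstein Fano manifolds, or more generally singularities admitting a special degeneration to a Ricci-flat K\"ahler cone --- because in those cases the required stable degeneration is known and the local volume is related to the metric volume density $\Theta$, which is multiplicative under products. The paper also suggests (remark~(b)) that the degeneration machinery of \S\ref{sec:adjunction} should handle the case where one factor is toric, which is a sharper and more tractable sub-case than the general reduction you propose. What is missing from your sketch, and genuinely missing from the state of the art as the paper records it, is an unconditional argument that (i) the minimizer degenerates the singularity to a K-semistable Fano cone, and (ii) the product of two K-semistable Fano cones is K-semistable. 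Without these inputs your reduction to the two-dimensional Reeb cone computation does not close. Your proposed ``workaround'' by tensor-product approximation of ideals is also unlikely to work directly: controlling $\lct$ of an $\fm$-primary ideal on $X_1\times X_2$ by its separate restrictions is exactly the step that is false in general without passing to an initial degeneration, which brings you back to the same difficulty. In short: your account of the problem is an accurate survey of why the statement is (in the paper) a conjecture, but it is not a proof and does not match a proof in the paper, because there is none.
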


\begin{rem} Let us make some remarks on Conjecture \ref{conj:product}.
\begin{enumerate}[label=(\alph*)]
    \item  If each singularity $(x_i\in X_i)$ lives on a Gromov-Hausdorff limit of K\"ahler-Einstein Fano manifolds, then by \cite[Corollary 5.7]{lx17} we know that 
    \[\hvol(x_i,X_i)=n_i^{n_i}\Theta(x_i,X_i)\]
    where $\Theta(\cdot,\cdot)$ is the volume density of a singularity (see e.g. \cite{hs16, Spotti-Sun}). It is well known that $\Theta((x_1,x_2),X_1\times X_2)=\Theta(x_1,X_1)\cdot\Theta(x_2,X_2)$, so Conjecture \ref{conj:product} holds in this case. More generally, similar arguments (e.g. \cite[Theorem 1.5]{lx17}) show that Conjecture \ref{conj:product} is true if each singularity $(x_i\in X_i)$ admits a special degeneration to a Ricci-flat K\"ahler cone singularity.
    \item Our method in this section should be enough to prove special cases of Conjecture \ref{conj:product} when one of the klt singularities is toric.
    \item It is also natural to expect the following statement to be true: let $v_i$ be a $\hvol$-minimizing valuations over $(x_i\in (X_i,D_i))$, then there exists a $\hvol$-minimizer $v$ over $((x_1,x_2)\in (X_1\times X_2,\pi_1^*D_1+\pi_2^*D_2))$ such that for any function $f_{i,j}\in\bC(X_i)^{\times}$, 
\[
 v(\sum_j f_{1,j}\otimes f_{2,j})=\min_j\left(\frac{n_1 v_1(f_{1,j})}{A_{(X_1,D_1)}(v_1)}+\frac{n_2 v_2(f_{2,j})}{A_{(X_2,D_2)}(v_2)}\right).
\]
\end{enumerate}
\end{rem}

\begin{psr}
After the first version of this article was posted on the arXiv, Li, Xu and the first author proved an inversion of adjunction type result on local volumes of plt pairs \cite[Proposition 6.8]{LLX18} based on Theorem \ref{thm:strongadj}.
\end{psr}

\section{Proof of Theorem \ref{thm:hypersurface-general}} \label{sec:hyp-general}

In this section we prove Theorem \ref{thm:hypersurface-general}. Since the minimal non-klt (or non-lc) colength of a singularity is bounded from below by its normalized colength, we have the following criterion by combining Theorems \ref{thm:main criterion} and Theorem \ref{thm:adjunction}:

\begin{cor} \label{cor:colength criterion}
Let $\delta\ge -1$ be an integer and let $X\subseteq\bP^{n+r}$ be a Fano complete intersection of index $1$, codimension $r$ and dimension $n$. Assume that
    \begin{enumerate}
        \item The singular locus of $X$ has dimension at most $\delta$ and $2n+1\ge 3(r+\delta)$;
        \item For every $x\in X$ and every general linear space section $Y\subseteq X$ of codimension $2r+\delta$ containing $x$, the normalized colength of $(Y,x)$ satisfies $\hell(x,Y)\ge \binom{n+r+1}{2r+\delta+1}$.
    \end{enumerate}
Then $X$ is birationally superrigid and K-stable.
\end{cor}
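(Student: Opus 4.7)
The strategy is to verify hypotheses (2) and (3) of Theorem \ref{thm:main criterion} directly, using Lemma \ref{lem:compare colengths} to pass between normalized colengths and minimal non-lc/non-klt colengths, and using the adjunction Theorem \ref{thm:adjunction} to transfer the colength estimate from the given codimension-$(2r+\delta)$ section $Y$ up to the codimension-$(r+\delta)$ section $Y'$. First, the dimensional condition $n \ge 2r+\delta+2$ of Theorem \ref{thm:main criterion} may be assumed without loss of generality: otherwise any codimension-$(2r+\delta)$ linear section $Y$ through $x$ has dimension at most $1$ and hence $\hell(x,Y) \le 1$, rendering hypothesis (2) of the corollary essentially vacuous.

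Hypothesis (2) of Theorem \ref{thm:main criterion} is immediate from Lemma \ref{lem:compare colengths} and our assumption: $\lnlc(x,Y) > \hell(x,Y) \ge \binom{n+r+1}{2r+\delta+1}$. For hypothesis (3), given a general codimension-$(r+\delta)$ linear section $Y'$ through $x$, extend $Y'$ to a flag of general hyperplane sections
$$Y' = Y_0 \supset Y_1 \supset \cdots \supset Y_r = Y$$
where $Y$ is a general codimension-$(2r+\delta)$ section of $X$. By Bertini (using $\dim\mathrm{Sing}(X) \le \delta$) each $Y_{i+1}$ is a normal Cartier divisor in $Y_i$, and the klt property at $x$ propagates upward from $Y$ along the flag by inversion of adjunction. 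Iterating Theorem \ref{thm:adjunction} $r$ times with $D=0$ yields
$$\hell(x,Y') \;\ge\; \hell(x,Y)\cdot \frac{(n-r-\delta)^{n-r-\delta}/(n-r-\delta)!}{(n-2r-\delta)^{n-2r-\delta}/(n-2r-\delta)!}.$$
Combining with Lemma \ref{lem:compare colengths} and the corollary's hypothesis (2), the desired bound $\lnklt(x,Y') > \binom{n+r}{r+\delta}$ reduces to the purely numerical inequality
$$\binom{n+r+1}{2r+\delta+1} \cdot \frac{(n-r-\delta)^{n-r-\delta}/(n-r-\delta)!}{(n-2r-\delta)^{n-2r-\delta}/(n-2r-\delta)!} \;>\; \binom{n+r}{r+\delta}. \qquad (\star)$$

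The main obstacle is verifying $(\star)$ under the hypothesis $2n+1 \ge 3(r+\delta)$. By Stirling's formula the adjunction factor on the left grows roughly like $e^r$ up to lower-order corrections, whereas $\binom{n+r}{r+\delta}/\binom{n+r+1}{2r+\delta+1}$ telescopes to
$$\frac{\prod_{i=1}^{r+1}(r+\delta+i)}{(n+r+1)\prod_{j=1}^{r}(n-r-\delta+j)},$$
controlled essentially by $((r+\delta)/(n-r-\delta))^r$ times a $1/(n+r+1)$ correction. The hypothesis $2n+1 \ge 3(r+\delta)$ rewrites as $n-r-\delta \ge (r+\delta-1)/2$, which bounds $(r+\delta)/(n-r-\delta)$ by a small constant and allows the exponential growth on the left to dominate the polynomial ratio on the right. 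Confirming this strict inequality in all cases is the only nontrivial computation.
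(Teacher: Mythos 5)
Your overall strategy matches the paper's proof exactly: apply Theorem \ref{thm:main criterion} via Lemma \ref{lem:compare colengths}, and use Theorem \ref{thm:adjunction} iterated $r$ times along a general flag of hyperplane sections to move the colength bound from the codimension-$(2r+\delta)$ section to the codimension-$(r+\delta)$ section. Your observation that the dimension bound $n\ge 2r+\delta+2$ of Theorem \ref{thm:main criterion} can be assumed without loss of generality (since otherwise a codimension-$(2r+\delta)$ section has dimension $\le 1$, forcing $\hell(x,Y)\le 1 < \binom{n+r+1}{2r+\delta+1}$ and making hypothesis (2) vacuous) is correct and is in fact a small point the paper does not spell out.

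However, you stop at the numerical inequality $(\star)$ and only offer a Stirling-type heuristic ("grows roughly like $e^r$ \ldots confirming this strict inequality is the only nontrivial computation"); that final step is precisely where a proof is required, so there is a genuine gap. The paper closes it cleanly: rather than carrying the exact adjunction factor, it first observes that one step of Theorem \ref{thm:adjunction} gives $\hell(x,Y)\ge\bigl(\tfrac{m}{m-1}\bigr)^{m-1}\hell(x,H)\ge 2\,\hell(x,H)$ (valid since $\bigl(\tfrac{m}{m-1}\bigr)^{m-1}\ge 2$ for $m\ge 2$), so iterating yields $\hell(x,Y)\ge 2^r\hell(x,Y')\ge 2^r\binom{n+r+1}{2r+\delta+1}$. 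Then one writes
\[
2^r\binom{n+r}{2r+\delta}=\binom{n+r}{r+\delta}\cdot\prod_{i=0}^{r-1}\frac{2(n-i-\delta)}{2r-i+\delta},
\]
and each factor in the product is $\ge 1$ exactly when $2n\ge 2r+3\delta+i$, which for $i\le r-1$ follows from the hypothesis $2n+1\ge 3(r+\delta)$. Combined with the strict inequality $\binom{n+r+1}{2r+\delta+1}>\binom{n+r}{2r+\delta}$, this gives the required strict bound $\hell(x,Y)>\binom{n+r}{r+\delta}$. You should replace the Stirling heuristic with this (or an equivalent) explicit verification.
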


\begin{proof}
By Theorem \ref{thm:main criterion} and Lemma \ref{lem:compare colengths}, it suffices to show that
\[\hell(x,Y)> \binom{n+r}{r+\delta}
\]
for every $x\in X$,  where $Y\subseteq X$ is a general linear section of codimension $r+\delta$ containing $x$. Let $Y'\subseteq Y$ be a general complete intersection of codimension $r$ in $Y$ (in particular, $Y'$ is a general linear section of codimension $2r+\delta$ in $X$). By Theorem \ref{thm:adjunction}, we have 
\[\hell(x,Y)\ge \left( \frac{m}{m-1} \right)^{m-1} \hell(x,H) \ge 2\hell(x,H)
\]
where $m=\dim Y$ and $H\subseteq Y$ is a general hyperplane section containing $x$, hence by our second assumption and a repeated use of the above inequality,
\begin{eqnarray*}
    \hell(x,Y) & \ge & 2^r \hell(x,Y') \; \ge \; 2^r \binom{n+r+1}{2r+\delta+1} \\
               & >   & 2^r \binom{n+r}{2r+\delta} \; = \; \binom{n+r}{r+\delta} \cdot \prod_{i=0}^{r-1} \frac{2(n-i-\delta)}{2r-i+\delta} \\
               & \ge & \binom{n+r}{r+\delta},
\end{eqnarray*}
where the last inequality holds as $2n\ge 3r+3\delta-1\ge 2r+3\delta+i$. This completes the proof.
\end{proof}

Theorem \ref{thm:hypersurface-general} can now be deduced from the following lower bounds of local volumes and normalized colengths.

\begin{lem} \label{lem:colength-bounds}
Let $x\in X$ be a hypersurface singularity of multiplicity $m\ge 2$ and dimension $n\ge m$. Assume that the tangent cone of $x\in X$ is smooth in codimension $m-1$, then 
\[\hvol(x,X)\ge \frac{n^n}{m^{m-1}},\quad \hell(x,X)\ge \frac{n^n/n!}{m^m}.
\]
\end{lem}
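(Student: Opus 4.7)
The plan is to prove the local volume bound $\hvol(x,X)\geq n^n/m^{m-1}$ by induction on $n\geq m$, and then deduce the colength bound using Proposition \ref{prop:lech} (Lech's inequality) together with the identity $\e(x,X)=m$ for a hypersurface of multiplicity $m$: indeed,
\[
\hell(x,X) \;\geq\; \frac{\hvol(x,X)}{n!\,\e(x,X)} \;\geq\; \frac{n^n/m^{m-1}}{n!\cdot m} \;=\; \frac{n^n/n!}{m^m},
\]
so only the volume bound requires substantive work.

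For the inductive step ($n>m$), I would embed $X$ locally as a hypersurface in $\bA^{n+1}$, let $H\subseteq\bA^{n+1}$ be a general hyperplane through $x$, and set $Y:=X\cap H$. Then $Y$ is a hypersurface singularity of dimension $n-1$ and multiplicity $m$ at $x$, normal and klt for general $H$ by Bertini and inversion of adjunction. A short Bertini-type count shows that the tangent cone $C_xY = C_xX\cap H$ remains smooth in codimension $m-1$: since $\dim\mathrm{Sing}(C_xX)\leq n-m$, a general hyperplane through the origin brings this dimension down to at most $n-m-1 = (n-1)-m$. Applying Theorem \ref{thm:adjunction} to the Cartier divisor $Y\subseteq X$ together with the inductive hypothesis gives
\[
\frac{\hvol(x,X)}{n^n} \;\geq\; \frac{\hvol(x,Y)}{(n-1)^{n-1}} \;\geq\; \frac{1}{m^{m-1}}.
\]

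For the base case $n=m$, the projective tangent cone $V\subseteq\bP^m$ is a hypersurface of degree $m$ and dimension $m-1$; smoothness in codimension $m-1$ forces $V$ to be smooth. Thus the affine tangent cone $C := C_xX$ is the cone over a smooth Fano hypersurface of index $1$ in $\bP^m$. Deforming $X$ to $C$ and invoking lower semi-continuity of local volumes in $\bQ$-Gorenstein flat families \cite{bl-semicont} yields $\hvol(x,X)\geq\hvol(0,C)$. Since $V$ is K-semistable---by part (1) of the introduction's first theorem when $m\geq 4$, by Tian's theorem on smooth cubic surfaces when $m=3$, and trivially when $m=2$ (in which case $V\cong\bP^1$)---the Li--Liu--Xu formula \cite{LL-vol-min,lx16} for cones over K-semistable bases gives
\[
\hvol(0,C) \;=\; 1^m\cdot L^{m-1} \;=\; m \;=\; \frac{m^m}{m^{m-1}},
\]
completing the base case.

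The main obstacle is the base case, which pulls together two external facts: lower semi-continuity of local volumes in $\bQ$-Gorenstein families, and K-semistability of smooth index-$1$ Fano hypersurfaces (itself a nontrivial synthesis of the birational rigidity literature). The inductive step is comparatively routine once Theorem \ref{thm:adjunction} is in hand, needing only the Bertini-type verification that the tangent-cone hypothesis is preserved under general hyperplane section.
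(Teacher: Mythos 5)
Your proof is correct and follows essentially the same route as the paper: repeated use of Theorem \ref{thm:adjunction} to cut down to a general linear section of dimension $m$, a $\bQ$-Gorenstein degeneration of that section to the cone over its smooth projective tangent cone combined with lower semicontinuity of local volumes from \cite{bl-semicont}, the computation $\hvol = m$ via K-(semi)stability of the smooth degree-$m$ hypersurface in $\bP^m$ and \cite{LL-vol-min}, and finally Proposition \ref{prop:lech} for the colength bound. The only difference is presentational: you package the chain of adjunction inequalities as an induction (with the degeneration occurring at the base case $n=m$), whereas the paper cuts directly to the dimension-$m$ section $Y$, degenerates $Y$, and then writes the chain $\hvol(x,X)/n^n \ge \cdots \ge \hvol(x,Y)/m^m \ge 1/m^{m-1}$; these are logically equivalent, and your explicit Bertini-type check that the tangent-cone hypothesis persists under a general hyperplane section is a detail the paper leaves implicit.
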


\begin{proof}
It suffices to prove the first inequality since the second follows from the first by Proposition \ref{prop:lech}. Let $Y=X\cap H_1 \cap H_2 \cap \cdots \cap H_{n-m}$ be a general complete intersection of dimension $m$ containing $x$, then we have $\rme(x,Y)=\rme(x,X)=m$ and by our assumption on the tangent cone of $X$, we see that $x\in Y$ has a smooth tangent cone $V$ given by a smooth hypersurface of degree $m$ in $\bP^m$. It is clear that there exists a $\bQ$-Gorenstein flat family over $\mathbb{A}^1$ specially degenerating $Y$ to the affine cone $C_p(V)$ over $V$ (with vertex $p$), hence by \cite[Theorem 1]{bl-semicont}, we have $\hvol(x,Y)\ge \hvol(p,C_p(V))$. On the other hand, $V$ is K-stable by \cite[Theorem 1.2 and Example 1.3(2)]{Fujita-alpha}, thus by \cite[Corollary 1.5]{LL-vol-min}, the local volume of $p\in C_p(V)$ is computed by the blowup of $p$ and we get $\hvol(p,C_p(V))=m$. Combining these with Theorem \ref{thm:adjunction} we obtain:
\[\frac{\hvol(x,X)}{n^n}\ge \frac{\hvol(x,X\cap H_1)}{(n-1)^{n-1}}\ge \cdots \ge \frac{\hvol(x,Y)}{m^m} \ge \frac{\hvol(p,C_p(V))}{m^m} = \frac{1}{m^{m-1}},
\]
giving the required statement of the lemma.
\end{proof}

\begin{proof}[Proof of Theorem \ref{thm:hypersurface-general}]
The condition (2) is preserved by taking general hyperplane sections containing a given point, hence for every $x\in X$ and every general complete intersection $Y\subseteq X$ of codimension $2+\delta$ containing $x$, we have 
\[\hell(x,Y)\ge \frac{1}{m^m}\cdot \frac{(n-2-\delta)^{n-2-\delta}}{(n-2-\delta)!} \ge \binom{n+2}{\delta+3}
\]
by Lemma \ref{lem:colength-bounds} and condition (3). Hence $X$ satisfies both assumptions of Corollary \ref{cor:colength criterion} (with $r=1$) and the result follows.
\end{proof}

If the hypersurface $X\subseteq \bP^{n+1}$ has isolated ordinary singularities (i.e. the tangent cone is smooth), then one can usually expect a better lower bound of its corresponding local volume. Indeed, we should have $\hvol(x,X)\ge m(n+1-m)^n$ (where $m=\rme(x,X)$) by \cite[Corollary 1.5]{LL-vol-min} and the conjectural K-stability of smooth Fano hypersurfaces. For small multiplicities, this usually give better results than Theorems \ref{thm:hypersurface-ordinary} and Theorem \ref{thm:hypersurface-general}. For instance, in the case of $m=2$ we do have $\hvol(x,X)\ge 2(n-1)^n$ (quadric hypersurfaces admit K\"ahler-Einstein metrics as they are homogeneous), hence:

\begin{cor}
Let $X\subseteq\bP^{n+1}$ be a hypersurface of degree $n+1$. Assume that $X$ has at most ordinary double points, then $X$ is birationally superrigid and K-stable when $n\ge 11$.
\end{cor}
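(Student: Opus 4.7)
The plan is to apply Corollary \ref{cor:colength criterion} with $r=1$ and $\delta=0$. Condition (1) there reads $2n+1 \geq 3$ and is immediate, so the task reduces to verifying condition (2): for every $x \in X$ and every general codimension-$2$ linear section $Y \subseteq X$ containing $x$, we have $\hell(x,Y) \geq \binom{n+2}{3}$. Note $\dim Y = n-2$; by Bertini $Y$ inherits the singularity type of $X$ at $x$, so $x$ is either smooth on $Y$ or an ordinary double point (in the latter case the projective tangent cone of $Y$ is obtained by cutting the smooth tangent quadric of $X$ with two general hyperplanes through the vertex, producing a smooth quadric $V \subseteq \bP^{n-2}$ of dimension $n-3$).

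At smooth points, the smooth case of Proposition \ref{prop:lech} gives $\hell(x,Y) = (n-2)^{n-2}/(n-2)!$, which dominates $\binom{n+2}{3}$ easily for $n \geq 11$. The interesting case is when $x$ is an ordinary double point of $Y$. Here the key is an improved lower bound on $\hvol(x,Y)$ exploiting the K-semistability of smooth quadrics (Lemma \ref{lem:colength-bounds} alone only gives $\hvol(x,Y) \geq (n-2)^{n-2}/2$, and a quick check shows this is not enough at $n=11$). The point is that $Y$ admits a $\bQ$-Gorenstein special degeneration near $x$ to the affine cone $C_p(V)$ over $V$, so by \cite[Theorem 1]{bl-semicont} we have $\hvol(x,Y) \geq \hvol(p, C_p(V))$. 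Since $V$ is homogeneous it admits a K\"ahler-Einstein metric and is K-semistable, so by \cite[Corollary 1.5]{LL-vol-min} the local volume of $p$ is computed by the blow-up valuation, whose log discrepancy equals the Fano index $n-3$ and whose volume equals $\deg V = 2$. This yields $\hvol(p, C_p(V)) = 2(n-3)^{n-2}$; combined with Proposition \ref{prop:lech} and $\e(x,Y) = 2$, we obtain $\hell(x,Y) \geq (n-3)^{n-2}/(n-2)!$.

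It remains to verify the numerical inequality $(n-3)^{n-2}/(n-2)! \geq \binom{n+2}{3}$ for $n \geq 11$. At $n = 11$ the left side is $8^9/9! \approx 370$ while the right is $\binom{13}{3} = 286$; by Stirling the left grows exponentially in $n$ while the right is cubic, so the inequality propagates to all larger $n$. I do not anticipate any serious obstacle beyond this estimate being essentially sharp at $n = 11$, which is precisely why the stated cutoff appears in the corollary.
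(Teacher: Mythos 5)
Your proof is correct and follows essentially the same route as the paper: apply Corollary \ref{cor:colength criterion} with $(r,\delta)=(1,0)$, use the degeneration to the cone over the smooth quadric tangent cone together with K-semistability of quadrics (via \cite[Theorem 1]{bl-semicont} and \cite[Corollary 1.5]{LL-vol-min}) to obtain $\hvol(x,Y)\ge 2(n-3)^{n-2}$, convert to a bound on $\hell$ by Lech's inequality, and check the numerical threshold. The paper compresses the derivation of the improved volume bound into a remark preceding the corollary, whereas you spell out the computation $\hvol(p,C_p(V))=2(n-3)^{n-2}$ explicitly and correctly note that the weaker bound from Lemma \ref{lem:colength-bounds} would fail at $n=11$ --- that observation is exactly the point of the remark.
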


\begin{proof}
Take $\delta=0$ and $r=1$ in Corollary \ref{cor:colength criterion}. By the above remark and Proposition \ref{prop:lech}, for every general complete intersection $x\in Y\subseteq X$ of codimension $2$ we have
\[\hell(x,Y)\ge \frac{1}{2(n-2)!} \hvol(x,Y) = \frac{(n-3)^{n-2}}{(n-2)!}.
\]
Hence by Corollary \ref{cor:colength criterion}, such hypersurface $X$ is birationally superrigid and K-stable as long as $\frac{(n-3)^{n-2}}{(n-2)!}\ge \binom{n+2}{3}$, or equivalently, $n\ge 11$.
\end{proof}

\bibliography{ref}
\bibliographystyle{alpha}

\end{document}